\documentclass[12pt]{iopart}

\usepackage{iopams}

\usepackage{graphicx}
\usepackage{amssymb}
\usepackage{amsthm}
\usepackage{latexsym}

\theoremstyle{plain}
\newtheorem{theorem}{Theorem}[section]

\newtheorem{lemma}[theorem]{Lemma}
\newtheorem{Problem}[theorem]{Problem}

\theoremstyle{definition}
\newtheorem{definition}[theorem]{Definition}

\def\clo#1{\overline{#1}}
\def\text#1{\mbox{#1}}

\newcommand{\ra}{\rangle}
\newcommand{\la}{\langle}

\eqnobysec

\usepackage[letterpaper,margin=1in]{geometry}

\begin{document}

\title{On the multi-frequency inverse source problem in heterogeneous media}

\author{S Acosta$^{1,2}$, S Chow$^2$, J Taylor$^2$ and V Villamizar$^2$}
\address{$^1$Computational and Applied Mathematics, Rice University, Houston, TX 77005, USA}
\address{$^2$Department of Mathematics, Brigham Young University, Provo, UT 84602, USA}
\ead{sebastian.acosta@rice.edu}
\begin{abstract}
The inverse source problem where an unknown source is to be identified from the knowledge of its radiated wave is studied. The focus is placed on the effect that multi-frequency data has on establishing uniqueness. In particular, it is shown that data obtained from finitely many frequencies is not sufficient. On the other hand, if the frequency varies within a set with an accumulation point, then the source is determined uniquely, even in the presence of highly heterogeneous media. In addition, an algorithm for the reconstruction of the source using multi-frequency data is proposed. The algorithm, based on a subspace projection method, approximates the minimum-norm solution given the available multi-frequency measurements. A few numerical examples are presented.
\end{abstract}

\submitto{\IP}


\section{Introduction} \label{Section:Intro}
We study the inverse source problem for the Helmholtz equation where an unknown source is to be identified from the knowledge of its radiated wave. This problem has particular applications in the antenna synthesis problem \cite{Balanis2005,AngKirKlein1991} as well as in medical imaging techniques such as electroencephalography (EEG) \cite{AHLM2009,AlKressSerr2009}, magnetoencephalography (MEG) \cite{HeRom1998,AmBaoFlem2002,AlbMonk2006}, photo- and thermo-acoustic tomography \cite{AZML2007,StefUhl2009,StefUhl2011}, and optical tomography \cite{ArrRev1999}. For instance, the recording of the brain's electrical activity (such as the origin, path and destination of electrical currents) may be used to relate external stimuli (visual, tactile, auditory) to neuronal connectivity and brain functionality \cite{JirMc2007,LuKaufman2003}. Hence, the reconstruction of current sources with spatial resolution is particularly crucial in this branch of scientific investigation. An excellent review of some of the latest mathematical developments for medical imaging is found in \cite{Amm2009,Amm2012}. Particular attention is given therein to imaging techniques that can provide insight or monitoring of physiological events and be employed as a research tool in cognitive science.

The direct source problem has been thoroughly investigated over the past centuries and a huge amount of information is available in the literature. In contrast, the inverse source problem has received much less attention. The reason for this is that the inverse source problem is inherently ill-posed in terms of uniqueness and stability. Due to the existence of non-radiating sources, infinitely many solutions can be obtained. Hence, it is impossible for the true source to be uniquely reconstructed from a set of measurements at a single frequency. This phenomenon holds true for problems governed by the Helmholtz (acoustics), Maxwell (electrodynamics) and Laplace (gravimetry and electrostatics) equations \cite{AlbMonk2006,Isakov1998,Isakov1990,HauKuhPott2005,DassFokKar2005,BleCoh1977,MarDevZio2000}.

In most mathematical works, one or more of the following assumptions are made in order to simplify the inverse source problem:
\begin{itemize}
\item[1.]{Static or quasi-static fields \cite{AHLM2009,AmBaoFlem2002,HauKuhPott2005,DassFokKar2005}}
\item[2.]{Homogeneous media \cite{BleCoh1977,BaDuong2000,HauKuhPott2005,AZML2007,AlKressSerr2009,EllVal2009,BaoLinTri2010,AMR2009,DassFokKar2005,MarDevZio2000,LakLouis2008}}
\item[3.]{Point-like sources \cite{AHLM2009,AlKressSerr2009,HeRom1998}}
\end{itemize}
The above assumptions make the problem at hand much more tractable. However, in some situations, they may introduce unrealistic approximations leading to poor imaging of the source. Independence from all three assumptions is rarely found in the literature.

It is usually the case that the source to be identified is embedded in heterogeneous media \cite{AmBaoFlem2002,AlbMonk2006,StefUhl2009,StefUhl2011,DevMarLi2007}. For instance, in the photoacoustic imaging of the brain, it is important to incorporate the sudden change of sound speed across the skull \cite{StefUhl2009,StefUhl2011,YangWang2008}. Hence, we consider the Helmholtz equation with variable coefficients to model the characteristics of the medium. 

We assume the presence of a source function $F$ with its support in a bounded open set $\Omega \subset \mathbb{R}^3$ with boundary $\Gamma$. The radiated field $u$ satisfies the following problem
\begin{eqnarray}
\nabla \cdot a(x) \nabla u(x,k) + k^2 b(x) u(x,k) = - F(x,k) \quad \text{in} \quad \mathbb{R}^3, \label{Eqn:Intro.001} \\
\lim_{r \rightarrow \infty} r \big( \partial_{r} u(x,k) - i k u(x,k) \big) = 0, \label{Eqn:Intro.002}
\end{eqnarray}
where $r = |x|$. We will refer to $k > 0$ as the \emph{frequency} and the limit in (\ref{Eqn:Intro.002}) is known as the Sommerfeld radiation condition. Technical conditions on the coefficients $a(x)$ and $b(x)$ are given in Section \ref{Section:DirectProblem} following the weak formulation of this problem. In the present work, we only consider separable sources of the form
\begin{eqnarray}
F(x,k) = h(k) f(x), \label{Eqn:Intro.003}
\end{eqnarray}
where $h(k) \neq 0$ is a known function. This model appears, for instance, in the formulation of the photoacoustic tomography problem \cite{AZML2007} where $h(k)=i k$. 

The multi-frequency inverse source problem is to identify $f$ from measurements of the radiated field $u$ on $\Gamma$ for all frequencies $k \in \mathcal{K}$ where $\mathcal{K} \subset \mathbb{R}_{+}$ is the set of admissible frequencies. This research topic is motivated by applications in acoustic, elastic and electromagnetic remote sensing where the spatial profile of the source function is frequency-independent \cite{AZML2007,StefUhl2009,StefUhl2011,EllVal2009,BaoLinTri2010,AGKS2011,BLT2011}. The main two goals of the present work are the following:
\begin{itemize}
\item[1.]{Obtain the uniqueness property for the multi-frequency inverse source problem using the weak formulation of the BVP. In contrast with the above three assumptions, we consider multi-frequency wave fields (possibly associated with temporal evolution via the Fourier transform), heterogeneous media, and distributed sources.}
\item[2.]{Formulate a reconstruction algorithm based on a variational characterization of the unknown source and a subspace projection method using measurable data with multi-frequency content.}
\end{itemize}

As shown in Section \ref{Sec:MultiFreqUnique}, the unknown source can be identified from the knowledge of multi-frequency measurements even in the presence of heterogeneous media. Recently, Eller and Valdivia \cite{EllVal2009}, and Bao \textit{et al.} \cite{BaoLinTri2010} and Alves \textit{et al.} \cite{AMR2009} have investigated the same problem for homogeneous media. In \cite{EllVal2009} it was proved that the knowledge of $u(x,k)$ for all $x \in \Gamma$ and all $k \in \mathcal{K}$ is sufficient to recover $f$ uniquely if $\mathcal{K} = \mathbb{R}_{+}$. In fact, their proof only requires data obtained at frequencies coinciding with the eigenvalues of the negative Laplacian in the region $\Omega$. Other uniqueness results were obtained in \cite{AMR2009} and \cite{BaoLinTri2010} by requiring $\mathcal{K}$ to only contain an interval or a sequence with a limit point. They used the fact that plane waves satisfy the Helmholtz equation (homogeneous media) which leads to the Fourier transform of the unknown source $f$ in terms of multi-frequency boundary measurements. Since $f$ has compact support then its Fourier transform is analytic and completely determined by such multi-frequency data.

In our present study for the Helmholtz equation in heterogeneous media, plane waves no longer satisfy the governing equation. Instead of using the analyticity of the Fourier transform of $f$, we managed to directly prove that the radiating wave field $u_{k}$ is analytic with respect to the frequency $k>0$ (see Theorem \ref{Thm:Analyt} below). This leads to the uniqueness result for the inverse source problem for multi-frequency data with an accumulation point. In light of \cite{StefUhl2012} and the use of the Fourier transform in time, one may conjecture that the uniqueness result for the inverse source problem is valid if $h=h(x,k)$ is a non-vanishing known function and analytic with respect to the frequency $k$. Unfortunately, the approach pursued in this paper does not cover such a case.

In order to obtain the multi-frequency uniqueness property and the development of a practical reconstruction method, we review the decomposition of any source $f \in L^{2}(\Omega)$ as the superposition of a non-radiating component $f_{N}$ and a purely-radiating part $f_{P}$ which happens to be the minimum-$L^{2}$-norm solution of the inverse problem. This is done in Sections \ref{Section:IverseProblem} and \ref{Sec:VarCharcSource} using the variational framework. Early results in this direction were obtained by Bleistein and Cohen \cite{BleCoh1977}, Marengo, Devaney and Ziolkowski \cite{MarDevZio2000,MarZiol1999,MarDev2004}, among others, and rigorously established in the variational setting by Albanese and Monk for Maxwell's equations \cite{AlbMonk2006}. In particular, we take great advantage of the concept of purely-radiating sources and their characterization as solutions of the so-called \textit{adjoint} problem \cite{MarDev2004}.

Finally, an algorithm for the reconstruction of the source using multi-frequency data is proposed in Section \ref{Sec:Algorithm}. We assume that the measured data is available for a finite set of increasing frequencies $\mathcal{K} = \{ k_{1}, k_{2}, ..., k_{J}\}$. In this case, the reconstructed source can be characterized as the minimum-norm function in the intersection of $J$ affine subspaces of $L^{2}(\Omega)$. The variational characterization of the source in terms of the adjoint problem (Section \ref{Sec:VarCharcSource}) is employed as a projection method to obtain an approximation to the unknown source. A few numerical examples are presented as well.


\section{The direct source problem} \label{Section:DirectProblem}
In this section we briefly summarize the main properties of the direct source problem for the Helmholtz equation using its weak formulation. The basic tools necessary to prove existence, uniqueness and stability of the solutions are based on the Riesz-Fredholm theory for compact perturbations of a boundedly invertible operator. Such approach is comprehensibly described by Gilbarg and Trudinger \cite{GilTrud2001} and McLean \cite{McLean2000} among others. We also establish the analyticity with respect to the frequency $k$ of a solution to the variable-coefficient BVP (\ref{Eqn:Intro.001})-(\ref{Eqn:Intro.002}). This is accomplished using the analytic Fredholm theory \cite{ColtonKress02,RenRog2004}.

We assume the source $f$ to have its support within a bounded open domain $\Omega \subset \mathbb{R}^3$ whose Lipschitz boundary is denoted by $\Gamma$. Hence, it is possible to reformulate the BVP (\ref{Eqn:Intro.001})-(\ref{Eqn:Intro.002}) in the domain $\Omega$ by replacing the Sommerfeld radiation condition at infinity by the exterior Dirichlet-to-Neumann (DtN) condition on the enclosing surface $\Gamma$. This renders a BVP in $\Omega$ whose solution coincides exactly with the restriction to $\Omega$ of the solution for the original problem (\ref{Eqn:Intro.001})-(\ref{Eqn:Intro.002}). For excellent references concerning the DtN map (also known as the Steklov-Poincar\'{e} operator) see \cite{McLean2000,ColtonKress02,Ihlen1998,NedelecBook2001,MonkBook2003}. The weak formulation of the BVP is well-suited to model highly heterogeneous media, and the DtN formulation allows us to naturally incorporate the boundary $\Gamma$ where the measurements take place.

The following is the weak or variational problem with the Sobolev space $H^{1}(\Omega)$ as the natural choice for the trial and test spaces.
\begin{Problem}[Direct Problem] \label{Prob:VariationalDtN}
Given a source function $f \in L^{2}(\Omega)$, find a function $u \in H^{1}(\Omega)$ satisfying
\begin{eqnarray}
&& B(u,v) = h \la f , v \ra_{L^{2}(\Omega)}, \qquad \text{for all} \quad v \in H^{1}(\Omega) \label{Eqn:VF.001}
\end{eqnarray}
where the sesquilinear form $B :  H^{1}(\Omega) \times  H^{1}(\Omega) \rightarrow \mathbb{C}$ is defined as follows,
\begin{eqnarray}
&& B(u,v) = \la a \nabla u , \nabla v \ra_{L^{2}(\Omega)} - k^2 \la b u , v \ra_{L^{2}(\Omega)} - \la \Lambda \gamma u , \gamma v \ra_{L^{2}(\Gamma)}.  \label{Eqn:VF.002}
\end{eqnarray}
Here $\gamma : H^{1}(\Omega) \rightarrow H^{1/2}(\Gamma)$ is the trace operator, and $\Lambda : H^{1/2}(\Gamma) \rightarrow H^{-1/2}(\Gamma)$ denotes the (exterior) Dirichlet-to-Neumann map which transfers the boundary values of a radiating solution of the Helmholtz equation in $\mathbb{R}^3 \setminus \Omega$ into its normal derivative on $\Gamma$. When needed, we will make explicit reference to the dependence of $u$, $h$ and $\Lambda$ on the frequency $k$ by writing $u=u_{k}$, $h = h_{k}$ and $\Lambda=\Lambda_{k}$.
\end{Problem}
We assume that the coefficients $a(x)$ and $b(x)$ are real and that $a,b \in L^{\infty}(\Omega)$. This ensures the boundedness of the sesquilinear form $B$. In order to work with a uniformly elliptic problem, we also require the existence of a constant $a_{0}$ such that $0 < a_{0} \leq a(x)$. We also assume that $0 < b$, $b^{-1} \in L^{\infty}(\Omega)$, and that the supports of $1-a(x)$ and $1-b(x)$ lie winthin $\Omega$. The above problem is simply the weak formulation of the following BVP,
\begin{eqnarray}
&& \nabla \cdot a \nabla u + k^2 b u = - h f \quad \text{in} \quad \Omega, \label{Eqn:CF.005} \\
&& \partial_{\nu} u - \Lambda u = 0 \quad \text{on} \quad \Gamma , \label{Eqn:CF.006}
\end{eqnarray}
where $\nu$ denotes the outward unit normal vector on $\Gamma$.

The well-posedness of the Direct Problem \ref{Prob:VariationalDtN} follows from the Riesz-Fredholm theory or the Babu\v{s}ka-Lax-Milgram theorem. For details see \cite{Kress01,GilTrud2001,McLean2000,HsiaoWend2008,Ihlen1998}. Hence, we obtain the well-posedness of the Direct Problem \ref{Prob:VariationalDtN}.
\begin{theorem}[Well-Posedness] \label{Thm:WellPosed}
For each $k>0$, the Direct Problem \ref{Prob:VariationalDtN} has a unique stable solution $u \in H^{1}(\Omega)$.
\end{theorem}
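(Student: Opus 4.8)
The plan is to recast the Direct Problem as an operator equation on $H^{1}(\Omega)$ and invoke the Riesz--Fredholm theory exactly as in the cited references: exhibit the sesquilinear form $B$ as a bounded coercive form plus a compact perturbation, so that existence and stability follow from injectivity, and then establish injectivity by a Rellich/unique-continuation argument. The only genuinely problem-specific ingredient is the last one; everything preceding it is soft functional analysis.

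First I would split the DtN operator as $\Lambda_{k} = \Lambda_{0} + (\Lambda_{k}-\Lambda_{0})$, where $\Lambda_{0}$ is the exterior DtN map of the Laplacian; $\Lambda_{0}$ is self-adjoint with $\la \Lambda_{0}\gamma u, \gamma u\ra_{L^{2}(\Gamma)} \le 0$ (by Green's identity on the exterior domain), while $\Lambda_{k}-\Lambda_{0} : H^{1/2}(\Gamma) \to H^{-1/2}(\Gamma)$ is compact, indeed smoothing, see \cite{McLean2000,ColtonKress02}. Accordingly set
\[
B_{0}(u,v) = \la a \nabla u, \nabla v\ra_{L^{2}(\Omega)} + \la u, v\ra_{L^{2}(\Omega)} - \la \Lambda_{0}\gamma u, \gamma v\ra_{L^{2}(\Gamma)}.
\]
Using the uniform ellipticity $a \ge a_{0} > 0$ and the sign of $\Lambda_{0}$, the form $B_{0}$ is bounded and coercive on $H^{1}(\Omega)$, so by Lax--Milgram its Riesz representative $A_{0}:H^{1}(\Omega)\to H^{1}(\Omega)$ is boundedly invertible. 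The remainder $B - B_{0}$ is the form associated with $u\mapsto -(k^{2}b+1)u$, which induces a compact operator on $H^{1}(\Omega)$ since $H^{1}(\Omega)\hookrightarrow L^{2}(\Omega)$ is compact (Rellich) and $b\in L^{\infty}(\Omega)$, plus the boundary term coming from $-(\Lambda_{k}-\Lambda_{0})\gamma u$, which is compact because $\gamma$ is bounded and $\Lambda_{k}-\Lambda_{0}$ is compact. Hence the operator $T$ representing $B$ satisfies $T = A_{0} - C$ with $C$ compact, and by the Fredholm alternative the theorem reduces to the injectivity of $T$.

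The main obstacle is this injectivity step, where the structure of the radiation problem enters. Suppose $B(u,v)=0$ for all $v\in H^{1}(\Omega)$. Since $1-a$ and $1-b$ are supported inside $\Omega$, $u$ extends to a solution $\tilde u$ of the homogeneous Helmholtz equation $\Delta\tilde u + k^{2}\tilde u = 0$ in $\mathbb{R}^{3}\setminus\clo\Omega$ satisfying the Sommerfeld condition, with Cauchy data on $\Gamma$ matching those of $u$. Taking $v=u$ and using that $a$ and $b$ are real shows $\mathrm{Im}\,\la \Lambda_{k}\gamma u, \gamma u\ra_{L^{2}(\Gamma)} = 0$; the standard flux identity for radiating solutions then forces the far-field pattern of $\tilde u$ to vanish, so Rellich's lemma gives $\tilde u \equiv 0$ outside $\clo\Omega$, whence $u$ has vanishing Cauchy data on $\Gamma$. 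Unique continuation for the variable-coefficient equation $\nabla\cdot a\nabla u + k^{2}bu = 0$ in $\Omega$ then yields $u\equiv 0$. With $T$ injective, the Fredholm alternative makes $T$ boundedly invertible, so the Direct Problem has a unique solution with $\|u\|_{H^{1}(\Omega)} \le C(k)\,|h|\,\|f\|_{L^{2}(\Omega)}$, which is the asserted stability.
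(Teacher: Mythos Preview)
Your proposal is correct and follows precisely the Riesz--Fredholm route the paper invokes; the paper itself gives no argument beyond citing the standard references, so your sketch is in fact more detailed than what appears there.

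One technical caveat worth flagging: the final unique-continuation step for $\nabla\cdot a\nabla u + k^{2}bu = 0$ in $\Omega$ is not guaranteed under the bare hypothesis $a\in L^{\infty}$; one typically needs $a$ Lipschitz (or some comparable regularity) for strong unique continuation in divergence form. Since the paper assumes $\mathrm{supp}(1-a)\Subset\Omega$, the equation has constant coefficients in a collar of $\Gamma$ and your Rellich argument does give $u\equiv 0$ there; propagating this into the interior, where $a$ is merely bounded measurable, requires an assumption the paper does not state explicitly. This is a gap in the paper's hypotheses rather than in your reasoning, and the references the paper cites impose such regularity when they carry out the analogous argument.
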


Furthermore, to be used in the analysis of the inverse source problem discussed in Section \ref{Sec:MultiFreqUnique}, we shall also establish the analyticity with respect to the frequency $k$ of a solution to the variable-coefficient problem (\ref{Eqn:Intro.001})-(\ref{Eqn:Intro.002}). In order to do so, we modify the use of the Dirichlet-to-Neumann operator $\Lambda : H^{1/2}(\Gamma) \rightarrow H^{-1/2}(\Gamma)$. Consider a smooth simply-connected open region $\tilde{\Omega}$ large enough to contain $\clo{\Omega}$. For instance, $\tilde{\Omega}$ may be a sufficiently large open ball. Let $\tilde{\Gamma}$ be the smooth boundary of $\tilde{\Omega}$. The unique weak solution for the original BVP (\ref{Eqn:Intro.001})-(\ref{Eqn:Intro.002}) satisfies the following variational problem,
\begin{eqnarray}
&& \tilde{B}(u,v) = h \la f , v \ra_{L^{2}(\tilde{\Omega})}, \qquad \text{for all} \quad v \in H^{1}(\tilde{\Omega}) \label{Eqn:VFm.001}
\end{eqnarray}
where the modified sesquilinear form $\tilde{B} :  H^{1}(\tilde{\Omega}) \times  H^{1}(\tilde{\Omega}) \rightarrow \mathbb{C}$ is defined as follows,
\begin{eqnarray}
&& \tilde{B}(u,v) = \la a \nabla u , \nabla v \ra_{L^{2}(\tilde{\Omega})} - k^2 \la b u , v \ra_{L^{2}(\tilde{\Omega})} - \la \tilde{\Lambda} \gamma u , \tilde{\gamma} v \ra_{L^{2}(\tilde{\Gamma})}.  \label{Eqn:VFm.002}
\end{eqnarray}
Here $\gamma : H^{1}(\Omega) \rightarrow H^{1/2}(\Gamma)$ and $\tilde{\gamma} : H^{1}(\tilde{\Omega}) \rightarrow H^{1/2}(\tilde{\Gamma})$ are trace operators, and $\tilde{\Lambda} : H^{1/2}(\Gamma) \rightarrow H^{1/2}(\tilde{\Gamma})$ denotes a \textit{modified} Dirichlet-to-Neumann operator which transfers the boundary values on $\Gamma$ of a radiating solution of the Helmholtz equation in $\mathbb{R}^3 \setminus \Omega$ into its normal derivative on the \textit{larger} surface $\tilde{\Gamma}$. Again, we will make explicit reference to the dependence of $u$, $h$ and $\tilde{\Lambda}$ on the frequency $k$ by writing $u=u_{k}$, $h = h_{k}$ and $\tilde{\Lambda}=\tilde{\Lambda}_{k}$. The regularity and analyticity of $\tilde{\Lambda}_{k}$ is established below.

Now, from the Riesz representation theorem, there exist bounded linear operators $P , Q_{k} : H^{1}(\tilde{\Omega}) \rightarrow H^{1}(\tilde{\Omega})$ and a unique element $F \in H^{1}(\tilde{\Omega})$ such that for all $u,v \in H^{1}(\tilde{\Omega})$,
\begin{eqnarray}
\la P u , v \ra_{H^{1}(\tilde{\Omega})} = \la a \nabla u , \nabla v \ra_{L^{2}(\tilde{\Omega})} + \la u , v \ra_{L^{2}(\tilde{\Omega})}, \label{Eqn:RR.001} \\
\la Q_{k} u , v \ra_{H^{1}(\tilde{\Omega})} = k^2 \la b u , v \ra_{L^{2}(\tilde{\Omega})} + \la u , v \ra_{L^{2}(\tilde{\Omega})} + \la \tilde{\Lambda}_{k} \gamma u , \tilde{\gamma} v \ra_{L^{2}(\tilde{\Omega})}, \label{Eqn:RR.002} \\
\la F , v \ra_{H^{1}(\tilde{\Omega})} = \la f , v \ra_{L^{2}(\tilde{\Omega})}. \label{Eqn:RR.003}
\end{eqnarray}
Then we can re-express the variational problem \ref{Eqn:VFm.001} as the following equation,
\begin{eqnarray}
(P - Q_{k})u_{k} = h_{k} F. \label{Eqn:OP.001}
\end{eqnarray}
Since the coefficients $a , b \in L^{\infty}(\tilde{\Omega})$ and $0 < a_{0} \leq a(x)$, then $P$ is bounded and coercive, and from the Lax-Milgram theorem we conclude that $P$ has a bounded inverse. In addition, since the embeddings of $H^{1}(\tilde{\Omega})$ into $L^2(\tilde{\Omega})$ and of $H^{1/2}(\tilde{\Gamma})$ into $L^{2}(\tilde{\Gamma})$ are compact, then we also get that $Q_{k}$ is a compact operator. That was the reason why we introduced the \textit{larger} surface $\tilde{\Gamma}$ and the modified DtN map $\tilde{\Lambda}$ which enjoys enough regularity. 

The derivation of (\ref{Eqn:OP.001}) yields the possibility to show that the unique solution of (\ref{Eqn:VFm.001}), or equivalently the weak solution of (\ref{Eqn:Intro.001})-(\ref{Eqn:Intro.002}), is in fact analytic with respect to the frequency $k$. First, it is convenient to establish the following lemma.

\begin{lemma} \label{Lemma:AnalytDtN}
Both, the Dirichlet-to-Neumann map $\Lambda_{\xi} : H^{1/2}(\Gamma) \to H^{-1/2}(\Gamma)$ and the \textit{modified} Dirichlet-to-Neumann map $\tilde{\Lambda}_{\xi} : H^{1/2}(\Gamma) \to H^{1/2}(\tilde{\Gamma})$, can be analytically extended as a bounded operators for complex frequencies $\xi \in \mathbb{C}$ in a neighborhood of the positive real line $\mathbb{R}_{+}$.
\end{lemma}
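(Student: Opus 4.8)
The plan is to reduce the statement to a property of the \emph{constant-coefficient} layer potentials associated with $\Gamma$ and with $\tilde{\Gamma}$. The point is that the coefficients satisfy $a\equiv b\equiv 1$ outside $\clo{\Omega}$, so the radiating field attached to Dirichlet data on $\Gamma$ solves the free-space Helmholtz equation in the exterior, and can be built from the fundamental solution
\[
\Phi_\xi(x,y) = \frac{e^{i\xi|x-y|}}{4\pi|x-y|},
\]
which for fixed $x\neq y$ is an \emph{entire} function of $\xi\in\mathbb{C}$. Consequently the single-layer operator $\mathcal{S}_\xi$, the double-layer operator $\mathcal{D}_\xi$, their boundary versions $S_\xi, K_\xi, K_\xi'$ and the hypersingular operator $T_\xi$ on the Lipschitz boundary $\Gamma$ form holomorphic families of bounded operators between the relevant trace spaces; this is classical and valid on Lipschitz domains (see \cite{McLean2000}), as is the fact that the kernel differences $S_\xi - S_0$, $K_\xi-K_0$, etc., are smoothing, hence compact, since subtracting the static kernel removes the leading singularity. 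I will also keep in mind that for $\mathrm{Im}\,\xi\neq 0$ the analytically continued operator need not correspond to a classically ``radiating'' (decaying) field; the content of the lemma is only the existence of the holomorphic extension, which the layer-potential construction supplies.

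First I would represent the exterior solution with data $g\in H^{1/2}(\Gamma)$ by a combined (Brakhage--Werner) layer potential, $u = \mathcal{D}_\xi\psi - i\eta\,\mathcal{S}_\xi\psi$ with a fixed coupling constant $\eta$ with $\mathrm{Im}\,\eta\neq 0$, so that the density $\psi$ in the appropriate trace space on $\Gamma$ solves a boundary integral equation $A_\xi\psi=g$ with $A_\xi=\tfrac12 I + K_\xi - i\eta S_\xi$. On a Lipschitz boundary one writes $A_\xi = (\tfrac12 I + K_0) + (K_\xi - K_0 - i\eta S_\xi)$, an invertible operator (Verchota) plus a compact holomorphic correction, so $A_\xi$ is a holomorphic family of Fredholm operators of index zero on a connected complex neighborhood $D$ of $\mathbb{R}_{+}$. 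For $\xi\in\mathbb{R}_{+}$ the choice $\mathrm{Im}\,\eta\neq 0$ forces $A_\xi$ to be injective (the standard Rellich/unique-continuation argument that removes the spurious interior resonances), hence invertible. The analytic Fredholm theorem \cite{ColtonKress02,RenRog2004} then gives that $\xi\mapsto A_\xi^{-1}$ is meromorphic on $D$ with a discrete pole set disjoint from $\mathbb{R}_{+}$; shrinking $D$ to a neighborhood $U\supset\mathbb{R}_{+}$ avoiding those poles, $A_\xi^{-1}$ is holomorphic and locally bounded on $U$.

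To finish I would write each map as a composition of holomorphic operator-valued families. Taking the exterior normal derivative of the combined-layer potential on $\Gamma$ and using the jump relations expresses $\Lambda_\xi$ as $B_\xi\,A_\xi^{-1}$, where $B_\xi$ is the holomorphic family built from $T_\xi$, $K_\xi'$ and $I$; hence $\Lambda_\xi : H^{1/2}(\Gamma)\to H^{-1/2}(\Gamma)$ is holomorphic on $U$. For $\tilde{\Lambda}_\xi$ I would use the same density $\psi = A_\xi^{-1}g$ but evaluate the normal derivative of the potential on $\tilde{\Gamma}$, which lies at a positive distance from $\Gamma$: the resulting operator $H^{1/2}(\Gamma)\to H^{1/2}(\tilde{\Gamma})$ has kernel $\partial_{\tilde{\nu}(x)}\big(\partial_{\nu(y)}\Phi_\xi(x,y) - i\eta\,\Phi_\xi(x,y)\big)$, which is jointly smooth in $(x,y)$ and entire in $\xi$, so it is holomorphic in $\xi$ (indeed with values in smoothing operators); composing with $A_\xi^{-1}$ shows $\tilde{\Lambda}_\xi$ is holomorphic on $U$.

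The step I expect to be the main obstacle is guaranteeing invertibility of $A_\xi$ at \emph{every} $\xi\in\mathbb{R}_{+}$, not merely at a generic set: a pure single- or double-layer representation fails precisely at the interior Dirichlet/Neumann eigenfrequencies, which accumulate, and this would be fatal for the continuation argument. One therefore has to commit to the combined-field representation (and record carefully why $\mathrm{Im}\,\eta\neq0$ yields injectivity at real $\xi$), or else argue through the resolvent of the self-adjoint exterior Dirichlet Laplacian together with a limiting-absorption argument. A secondary technical nuisance is that $\Gamma$ is only Lipschitz, so the mapping properties of the layer potentials and the compactness of the kernel differences must be invoked in the Lipschitz-domain form (as in \cite{McLean2000}) rather than from smooth-boundary theory. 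As a sanity check for the modified map, if one takes $\tilde{\Gamma}$ to be a sphere of radius $R$ then $\tilde{\Lambda}_\xi$ can be read off from a spherical-harmonic expansion whose entries involve the ratios $\xi R\,(h^{(1)}_n)'(\xi R)/h^{(1)}_n(\xi R)$, each analytic in $\xi$ away from the finitely many, non-real zeros of $h^{(1)}_n$, which independently confirms the claimed analyticity near $\mathbb{R}_{+}$.
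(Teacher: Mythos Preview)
Your approach is essentially the paper's: a Brakhage--Werner combined-layer ansatz on $\Gamma$, the analytic Fredholm theorem applied to the resulting boundary integral operator (invertible for every $\xi\in\mathbb{R}_+$, so the discrete exceptional set lies off the positive real axis), and then composition with the holomorphic normal-derivative maps on $\Gamma$ and on the distant surface $\tilde{\Gamma}$; the paper writes exactly $\Lambda_\xi = 2\,\partial_\nu^+(\mathcal{D}_\xi - i\mathcal{S}_\xi)(I + D_\xi - iS_\xi)^{-1}$ and the analogous formula for $\tilde{\Lambda}_\xi$. Two small remarks: with the ansatz $\mathcal{D}_\xi\psi - i\eta\,\mathcal{S}_\xi\psi$ the standard injectivity argument at real $\xi$ needs $\mathrm{Re}\,\eta\neq 0$ (the paper simply takes $\eta=1$), not $\mathrm{Im}\,\eta\neq 0$; and your decomposition $(\tfrac12 I + K_0)+\text{compact}$ via Verchota is actually the more robust route on a merely Lipschitz $\Gamma$, whereas the paper invokes compactness of $S_\xi,D_\xi$ on $H^{1/2}(\Gamma)$ directly.
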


Our proof is provided below at the end of this section. With the aid of this lemma then it follows that the mapping $\xi \mapsto Q_{\xi}$ is analytic in a neighborhood of $\mathbb{R}_{+}$ in the complex plane $\mathbb{C}$. The well-posedness Theorem \ref{Thm:WellPosed} implies that the operator $(P - Q_{\xi})^{-1}$ exists and is bounded for each $\xi \in \mathbb{R}_{+}$. Then we obtain from the analytic Fredholm theorem \cite{RenRog2004} that $(P - Q_{\xi})^{-1}$ is also analytic with respect to $\xi \in \mathbb{R}_{+}$. It follows that the solution of the Direct Problem \ref{Prob:VariationalDtN} is analytic with respect to the frequency $k>0$ because it can be expressed as $u_{k} = h_{k} (P - Q_{k})^{-1} F$. To be used in the analysis of the inverse problem, we state this result in the form of a theorem.
\begin{theorem}[Analyticity] \label{Thm:Analyt}
If $h=h(k)$ is analytic, then the solution $u=u(x,k)$ of the Direct Problem \ref{Prob:VariationalDtN} is analytic with respect to the frequency $k > 0$.
\end{theorem}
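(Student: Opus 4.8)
The plan is to read off the analyticity directly from the operator identity (\ref{Eqn:OP.001}), namely $(P - Q_{k})u_{k} = h_{k} F$, posed on the enlarged domain $\tilde{\Omega}$. Since $P$ is bounded, coercive and \emph{independent} of $k$, its inverse $P^{-1}$ exists and is bounded, so for complex $\xi$ we may factor $P - Q_{\xi} = P\,(I - A(\xi))$ with $A(\xi) := P^{-1}Q_{\xi}$. Solving (\ref{Eqn:OP.001}) then amounts to inverting $I - A(\xi)$, and the whole question reduces to showing that $\xi \mapsto (I - A(\xi))^{-1}$ is analytic on a complex neighbourhood of $\mathbb{R}_{+}$.

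First I would verify that $\xi \mapsto A(\xi)$ is an \emph{analytic family of compact operators} on a connected open set $D \subset \mathbb{C}$ containing $\mathbb{R}_{+}$. Inspecting the Riesz representation (\ref{Eqn:RR.002}), $Q_{\xi}$ depends on $\xi$ only through the entire function $\xi \mapsto \xi^{2}$ (in the $\xi^{2}\la b u , v \ra_{L^{2}(\tilde{\Omega})}$ term) and through the modified Dirichlet-to-Neumann operator $\tilde{\Lambda}_{\xi}$. By Lemma \ref{Lemma:AnalytDtN}, $\tilde{\Lambda}_{\xi}$ extends to a bounded, analytic operator-valued function on a neighbourhood of $\mathbb{R}_{+}$, which we take to be a connected tube $D$ about the positive real axis. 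Because the trace operators $\gamma,\tilde{\gamma}$ are bounded, (\ref{Eqn:RR.002}) then exhibits $\xi \mapsto Q_{\xi}$ — and hence $\xi \mapsto A(\xi) = P^{-1}Q_{\xi}$ — as built from sums and compositions of analytic operator-valued maps, so $A$ is analytic on $D$. Moreover, exactly as in the real-frequency discussion preceding Lemma \ref{Lemma:AnalytDtN}, $Q_{\xi}$ (and hence $A(\xi)$) factors through the compact embeddings $H^{1}(\tilde{\Omega}) \hookrightarrow L^{2}(\tilde{\Omega})$ and $H^{1/2}(\tilde{\Gamma}) \hookrightarrow L^{2}(\tilde{\Gamma})$, so $A(\xi)$ is compact for every $\xi \in D$.

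Next I would apply the analytic Fredholm theorem \cite{RenRog2004} on the connected domain $D$: either $I - A(\xi)$ is invertible for no $\xi \in D$, or $(I - A(\xi))^{-1}$ exists and is analytic on $D \setminus S$ for some set $S$ having no accumulation point in $D$. The second alternative holds here, because by the well-posedness Theorem \ref{Thm:WellPosed} the equation (\ref{Eqn:OP.001}) has a unique solution for every real $k > 0$, i.e. $I - A(k) = P^{-1}(P - Q_{k})$ is boundedly invertible for each $k \in \mathbb{R}_{+}$. In particular $S \cap \mathbb{R}_{+} = \emptyset$, and since $S$ cannot accumulate at points of $\mathbb{R}_{+} \subset D$, the set $D \setminus S$ is an open complex neighbourhood of $\mathbb{R}_{+}$ on which $\xi \mapsto (I - A(\xi))^{-1}$, and therefore $\xi \mapsto (P - Q_{\xi})^{-1} = (I - A(\xi))^{-1}P^{-1}$, is analytic with values in the bounded operators on $H^{1}(\tilde{\Omega})$.

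Finally, since $F \in H^{1}(\tilde{\Omega})$ is a fixed vector and $h = h(\xi)$ is analytic by hypothesis, the representation $u_{\xi} = h_{\xi}\,(P - Q_{\xi})^{-1}F$ exhibits $\xi \mapsto u_{\xi}$ as the product of an analytic scalar with an analytic $H^{1}(\tilde{\Omega})$-valued map, hence analytic on a neighbourhood of $\mathbb{R}_{+}$; restricting to $\Omega$ — where $u_{k}$ coincides with the solution of the Direct Problem \ref{Prob:VariationalDtN} — and then to real $k$ yields analyticity of $k \mapsto u_{k} \in H^{1}(\Omega)$ for $k > 0$, as claimed. I expect this assembly to be routine; the real work sits in verifying the hypotheses of the analytic Fredholm theorem — keeping $D$ connected and ensuring the exceptional set $S$ genuinely misses $\mathbb{R}_{+}$, which is precisely where Theorem \ref{Thm:WellPosed} enters — and, upstream of that, in the deferred Lemma \ref{Lemma:AnalytDtN}, i.e. the analytic continuation of the (modified) exterior DtN map to complex frequencies, which is the genuinely delicate analytic step.
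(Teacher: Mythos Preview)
Your proposal is correct and follows essentially the same route as the paper: both rely on the operator equation $(P - Q_{\xi})u_{\xi} = h_{\xi} F$ on the enlarged domain, use Lemma~\ref{Lemma:AnalytDtN} to obtain analyticity of $\xi \mapsto Q_{\xi}$, invoke compactness of $Q_{\xi}$ via the embeddings, and then apply the analytic Fredholm theorem together with Theorem~\ref{Thm:WellPosed} to conclude analyticity of $(P - Q_{\xi})^{-1}$ near $\mathbb{R}_{+}$. Your explicit factorization $P - Q_{\xi} = P(I - A(\xi))$ with $A(\xi) = P^{-1}Q_{\xi}$ simply makes transparent the standard hypothesis of the analytic Fredholm theorem, which the paper applies more tersely to $(P - Q_{\xi})$ directly.
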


Now we proceed to introduce the mathematical tools and notation to prove Lemma \ref{Lemma:AnalytDtN}. The proof is based on boundary integral operators and the analyticity of the fundamental solution for the Helmholtz equation. \emph{The main challenge lies in providing a meaningful extension for the definition of the DtN map $\Lambda_{\xi}$ for complex frequencies $\xi \in \mathbb{C}$}. In order to explain and resolve this difficulty, we make the following definitions concerning regions of interest and associated trace operators.
\begin{eqnarray*}
\text{Exterior domains} \quad & \Omega^{+} = \mathbb{R}^3 \setminus \clo{\Omega} \quad \text{and} \quad \tilde{\Omega}^{+} = \mathbb{R}^3 \setminus \clo{\tilde{\Omega}}. \\
\text{Surfaces} \quad & \Gamma = \partial \Omega \quad \text{and} \quad \tilde{\Gamma} = \partial \tilde{\Omega}. \\
\text{Trace operators} \quad & \gamma^{+} : H_{\rm loc}^{1}(\Omega^{+}) \to H^{1/2}(\Gamma). \\ 
			     & \tilde{\gamma}^{+} : H_{\rm loc}^{1+s}(\tilde{\Omega}^{+}) \to H^{1/2+s}(\tilde{\Gamma}), \quad s \geq 0. \\
\text{Normal derivative operators} \quad & \partial_{\nu}^{+} : H_{\rm loc}^{1}(\Omega^{+}) \to H^{-1/2}(\Gamma). \\
			& \partial_{\tilde{\nu}}^{+} : H_{\rm loc}^{1+s}(\tilde{\Omega}^{+}) \to H^{-1/2+s}(\tilde{\Gamma}), \quad s \geq 0.
\end{eqnarray*}
Here $\nu$ and $\tilde{\nu}$ are the unit normal vector on $\Gamma$ and $\tilde{\Gamma}$ pointing into $\Omega^{+}$ and $\tilde{\Omega}^{+}$, respectively. The normal derivative operators are well-defined to operate on weak solutions of an elliptic equation \cite{McLean2000}.

We consider complex frequencies $\xi \in \mathbb{C}$ with $\mathfrak{Re}~ \xi > 0$. Recall that for a frequency $\xi \in \mathbb{R}_{+}$, the DtN operator $\Lambda_{\xi} : H^{1/2}(\Gamma) \to H^{-1/2}(\Gamma)$ is defined to map boundary values (Dirichlet data) on $\Gamma$ of a radiating field in $\Omega^{+}$ to its normal derivative (Neumann data) on $\Gamma$. If $\xi \in \mathbb{C}$ with $\mathfrak{Im}~ \xi \geq 0$ then the DtN map is well-defined since the unique solvability of the exterior Dirichlet problem in the region $\Omega^{+}$ is guaranteed \cite{McLean2000,ColtonKress02,NedelecBook2001,HsiaoWend2008}. However, if $\mathfrak{Im}~ \xi < 0$, then the solvability of such an exterior problem is no longer certain. In fact, the outgoing fundamental solution for the Helmholtz equation grows exponentially with $r=|x|$ and it does not satisfy the Sommerfeld radiation condition (\ref{Eqn:Intro.002}). So we cannot expect such a BVP to be well-posed for $\mathfrak{Im}~ \xi < 0$. Hence, it is needed to modify the radiation condition to account for such growth.

We review the following exterior problem leading to the definition of the DtN map. Given Dirichlet data $g \in H^{1/2}(\Gamma)$, let $v \in H^{1}_{\rm loc}(\Omega^{+})$ be the weak solution of the following boundary value problem,
\begin{eqnarray}
\Delta v + \xi^2 v = 0 \quad \text{in} \quad \Omega^{+}, \label{Eqn:BVPDtN.001} \\
\gamma^{+} v = g \quad \text{on} \quad \Gamma, \label{Eqn:BVPDtN.002} \\
v(x) = \frac{e^{i \xi |x|}}{4 \pi |x|}\bigg[ v_{\infty}(\hat{x}) + \mathcal{O}(1/|x|) \bigg], \quad \text{as $|x| \to \infty$,} \label{Eqn:BVPDtN.003}
\end{eqnarray}
for some function $v_{\infty} : \mathbb{S}^2 \to \mathbb{C}$ known as the far-field pattern. Here $\hat{x} = x / |x|$ and $\xi \in \mathbb{C}$. The limit (\ref{Eqn:BVPDtN.003}) plays the role of the radiation condition which is satisfied by the fundamental solution for any complex frequency, and it is easily shown to be equivalent to the standard Sommerfeld condition (\ref{Eqn:Intro.002}) whenever $\mathfrak{Im}~ \xi \geq 0$. For the DtN operators to be well-defined, it is necessary to establish the well-posedness of the BVP (\ref{Eqn:BVPDtN.001})-(\ref{Eqn:BVPDtN.003}) for complex frequencies, at least in a neighborhood of the positive real line. Uniqueness follows from the work of Lax and Phillips \cite{LaxPhill1989,LaxPhill1972}. They showed that, aside from a discrete set (no point of accumulation) of complex frequencies with $ \mathfrak{Im}~ \xi < 0$, there are no nontrivial outgoing solutions for the Helmholtz equation with homogeneous Dirichlet boundary condition on $\Gamma$. To prove existence and stability, we make use of surface potential theory \cite{McLean2000,ColtonKress02,NedelecBook2001,HsiaoWend2008}.

Consider the single- and double-layer potentials
\begin{eqnarray}
\eqalign{
(\mathcal{S}_{\xi} \psi)(x) = \int_{\Gamma} \Phi(x,y,\xi) \psi(y) ds(y), \quad  x \in \mathbb{R}^3, \quad \xi \in \mathbb{C},  \\
(\mathcal{D}_{\xi} \psi)(x) = \int_{\Gamma} \frac{\partial \Phi(x,y,\xi)}{\partial \nu(y)} \psi(y) ds(y), \quad x \in \mathbb{R}^3, \quad \xi \in \mathbb{C},}  \label{Eqn.LayerPot}
\end{eqnarray}
and the single- and double-layer boundary integral operators
\begin{eqnarray}
\eqalign{
(S_{\xi} \psi)(x) = 2 (\mathcal{S}_{\xi} \psi)(x), \quad  x \in \Gamma, \quad \xi \in \mathbb{C},  \\
(D_{\xi} \psi)(x) = 2 (\mathcal{D}_{\xi} \psi)(x), \quad x \in \Gamma, \quad \xi \in \mathbb{C},}  \label{Eqn.LayerBIO}
\end{eqnarray}
where $\Phi(x,y,\xi) = e^{i \xi | x - y|} / (4\pi |x-y|)$ is the outgoing fundamental solution for the Helmholtz equation. Inherited from $\Phi(x,y,\xi)$, notice that these potentials and integral operators are analytic with respect to $\xi$. From the smoothness of $\Phi(x,y,\xi)$, we also have that $(S_{\xi} \psi)$ and $(D_{\xi} \psi)$ belong to $H_{\rm loc}^{2}(\tilde{\Omega}^{+})$ since $\text{dist} (\Gamma,\tilde{\Omega}^{+} ) > 0$. Therefore, the potentials (\ref{Eqn.LayerPot}) and integral operators (\ref{Eqn.LayerBIO}) define bounded linear operators satisfying the following regularity properties \cite{McLean2000,ColtonKress02,HsiaoWend2008},
\begin{eqnarray}
\eqalign{
S_{\xi} : H^{1/2}(\Gamma) \to H^{1}(\Gamma), \qquad & D_{\xi} : H^{1/2}(\Gamma) \to H^{1}(\Gamma), \\
\mathcal{S}_{\xi} : H^{1/2}(\Gamma) \to H_{\rm loc}^{1}(\Omega^{+}), \qquad & \mathcal{D}_{\xi} : H^{1/2}(\Gamma) \to H_{\rm loc}^{1}(\Omega^{+}),  \\
\gamma^{+} \mathcal{S}_{\xi} : H^{1/2}(\Gamma) \to H^{1/2}(\Gamma), \qquad & \gamma^{+} \mathcal{D}_{\xi} : H^{1/2}(\Gamma) \to H^{1/2}(\Gamma),  \\
\partial_{\nu}^{+} \mathcal{S}_{\xi} : H^{1/2}(\Gamma) \to H^{-1/2}(\Gamma), \qquad & \partial_{\nu}^{+} \mathcal{D}_{\xi} : H^{1/2}(\Gamma) \to H^{-1/2}(\Gamma), \\
\partial_{\tilde{\nu}}^{+} \mathcal{S}_{\xi} : H^{1/2}(\Gamma) \to H^{1/2}(\tilde{\Gamma}), \qquad & \partial_{\tilde{\nu}}^{+} \mathcal{D}_{\xi} : H^{1/2}(\Gamma) \to H^{1/2}(\tilde{\Gamma}),
} \label{Eqn.RegProp}
\end{eqnarray}
as well as the following jump relations \cite{McLean2000,ColtonKress02,HsiaoWend2008},
\begin{eqnarray}
\eqalign{
2~ (\gamma^{+} \mathcal{S}_{\xi} \psi)(x) = (S_{\xi} \psi)(x), \quad x \in \Gamma, \\
2~ (\gamma^{+} \mathcal{D}_{\xi} \psi)(x) = \psi(x) + (D_{\xi} \psi)(x), \quad x \in \Gamma.} \label{Eqn.jump}
\end{eqnarray}
With these mathematical tools in place, we are ready to obtain the desired proof.

\begin{proof}[Proof of Lemma \ref{Lemma:AnalytDtN}]
The goal is to extend the definition of the DtN map $\Lambda_{\xi}$ for complex frequencies in a neighborhood of the positive real line $\mathbb{R}_{+}$, and make the extension analytic. We accomplish this by mimicking the procedure found in \cite[Section 3.2]{ColtonKress02} to find solutions for exterior problems in the form of a combined single- and double-layer potential,
\begin{eqnarray}
v(x,\xi) = (\mathcal{D}_{\xi} \psi)(x) - i (\mathcal{S}_{\xi} \psi)(x), \quad x \in \Omega^{+}, \quad \xi \in \mathbb{C}. \label{Eqn.Ansatz}
\end{eqnarray}
From the jump relations (\ref{Eqn.jump}) we see that the ansatz (\ref{Eqn.Ansatz}) solves the BVP (\ref{Eqn:BVPDtN.001})-(\ref{Eqn:BVPDtN.003}) provided that the density $\psi \in H^{1/2}(\Gamma)$ solves the following boundary integral equation
\begin{eqnarray}
(I + D_{\xi} - i S_{\xi} ) \psi = 2 g, \quad \text{for $g \in H^{1/2}(\Gamma)$.} \label{Eqn.BIE}
\end{eqnarray}
Since $H^{1}(\Gamma)$ is compactly embedded into $H^{1/2}(\Gamma)$ then both $S_{\xi},D_{\xi} : H^{1/2}(\Gamma) \to H^{1/2}(\Gamma)$ are compact. Thus, the Riesz-Fredholm theory applies. Since the operator $(I + D_{\xi} - i S_{\xi} )$ has been shown to be invertible for each frequency $\xi \in \mathbb{C}$ with $\mathfrak{Im}~ \xi \geq 0$ \cite{ColtonKress02}, then the analytic Fredholm theorem \cite{RenRog2004,ColtonKress02} implies that $(I + D_{\xi} - i S_{\xi} )^{-1} : H^{1/2}(\Gamma) \to H^{1/2}(\Gamma)$ is bounded and analytic with respect to $\xi \in \mathbb{C}$, aside from a discrete set lying in the open lower half-plane. It follows that the ansatz (\ref{Eqn.Ansatz}) solves the BVP (\ref{Eqn:BVPDtN.001})-(\ref{Eqn:BVPDtN.003}) and is analytic in a complex neighborhood of each positive frequency.

Finally, from the regularity properties (\ref{Eqn.RegProp}), we obtain well-defined and bounded DtN operator $\Lambda_{\xi} : H^{1/2}(\Gamma) \to H^{-1/2}(\Gamma)$ and modified DtN operator $\tilde{\Lambda}_{\xi} : H^{1/2}(\Gamma) \to H^{1/2}(\tilde{\Gamma})$ explicitly given by
\begin{eqnarray}
\eqalign{
\Lambda_{\xi} = 2 \partial_{\nu}^{+} ( \mathcal{D}_{\xi} - i \mathcal{S}_{\xi} )(I + D_{\xi} - i S_{\xi} )^{-1}, \\
\tilde{\Lambda}_{\xi} = 2 \partial_{\tilde{\nu}}^{+} ( \mathcal{D}_{\xi} - i \mathcal{S}_{\xi} )(I + D_{\xi} - i S_{\xi} )^{-1}.} \label{Eqn.ExtDtN}
\end{eqnarray}
As desired, the DtN operators are meromorphic with respect to $\xi \in \mathbb{C}$, and analytic in a neighborhood of the positive real line. Moreover, by construction, the definitions in (\ref{Eqn.ExtDtN}) do in fact agree with the respective mappings $g \mapsto \partial_{\nu}^{+} v$ and $g \mapsto \partial_{\tilde{\nu}}^{+} v$ (Dirichlet data into Neumann data) associated with the BVP (\ref{Eqn:BVPDtN.001})-(\ref{Eqn:BVPDtN.003}). This concludes the proof.
\end{proof}


\section{The inverse source problem} \label{Section:IverseProblem}

In this section we state the inverse source problem and review some of its mathematical characteristics such as the lack of uniqueness and the concept of minimum-norm solution. Recall that given a source $f \in L^2(\Omega)$ and a frequency $k >0$ there exists a unique solution $u \in H^{1}(\Omega)$ for the Direct Problem \ref{Prob:VariationalDtN}. When needed, we will make explicit reference to the dependence of $u$ on the frequency $k$ by writing $u=u_{k}$ or $u= u(x,k)$. We consider an admissible set of frequencies denoted by $\mathcal{K} \subset \mathbb{R}_{+}$. With this notation we are ready to define the inverse problem.

\begin{Problem}[Inverse Source Problem] \label{Prob:InvSource}
Let $u_{k} \in H^{1}(\Omega)$ be the solution to the Direct Problem \ref{Prob:VariationalDtN} for given frequency $k \in \mathcal{K}$ and some unknown source $f \in L^{2}(\Omega)$. The inverse source problem is, given the traces $\gamma u_{k}$ on $\Gamma$, find the source $f$.
\end{Problem}

In light of previous results concerning multi-frequency uniqueness \cite{EllVal2009,BaoLinTri2010,AMR2009}, one may ask whether data obtained at finitely many frequencies is sufficient to recover the source $f$ uniquely. By showing the existence of sources that do not radiate at a finite number of distinct frequencies, we answer the above question in the negative.

The precise definition of non-radiating sources for the Helmholtz equation is given as follows.
\begin{definition}[Non-Radiating Source] \label{Def:Nonrad}
A source $f \in L^{2}(\Omega)$ is said to be \emph{non-radiating} at a frequency $k$ if the solution $u_{k} \in H^{1}(\Omega)$ of the Direct Problem \ref{Prob:VariationalDtN} corresponding to this source $f$ is such that $\gamma u_{k}=0$ on $\Gamma$. Let $N(\Omega,k)$ denote the set of all non-radiating sources for the Helmholtz equation at the frequency $k$.
\end{definition}

We now consider the following orthogonal decomposition of $L^{2}(\Omega)$,
\begin{eqnarray}
L^{2}(\Omega) = N(\Omega,k) \oplus N(\Omega,k)^{\perp} \label{Eqn:NUMN.001}
\end{eqnarray}
This direct sum is valid if $N(\Omega,k)$ is a closed subspace of $L^2(\Omega)$. This is the case because $N(\Omega,k)$ is the nullspace of the operator $\gamma \circ S : L^{2}(\Omega) \rightarrow H^{1/2}(\Gamma)$ where $\gamma$ is the trace operator and $S : L^{2}(\Omega) \rightarrow H^{1}(\Omega)$ represents the \emph{solution operator} for the Direct Problem \ref{Prob:VariationalDtN}. From the well-posedness Theorem \ref{Thm:WellPosed} it follows that $S$ is a bounded operator. Since both operators $\gamma$ and $S$ are bounded, then $N(\Omega,k) = \text{null}(\gamma \circ S)$ is a closed subspace of $L^{2}(\Omega)$ and the orthogonal decomposition (\ref{Eqn:NUMN.001}) is well-defined. As a consequence, we have the following definition for purely-radiating sources.
\begin{definition}[Purely-Radiating Source] \label{Def:Purerad}
A source $f \in L^{2}(\Omega)$ is said to be \emph{purely-radiating} at a frequency $k$ if $f \in N(\Omega,k)^{\perp}$.
\end{definition}

It is well-known that set $N(\Omega,k)$ is a non-trivial vector space. This is seen from the definition the following set
\begin{eqnarray}
\fl \mathcal{N}(\Omega,k) = \big\{ g \in L^{2}(\Omega) ~|~ \text{ $ h g = \nabla \cdot a \nabla w + k^2 b w ~$ for some $~ w \in C_{\rm c}^{\infty}(\Omega)$} \big\}. \label{Eqn:NUMN.003}
\end{eqnarray}
It is clear that $\mathcal{N}(\Omega,k) \subset N(\Omega,k)$. Moreover, $\mathcal{N}(\Omega,k)$ is a non-trivial vector space since there are plenty of functions in $C_{\rm c}^{\infty}(\Omega)$ that do not satisfy the Helmholtz equation in $\Omega$. This simple argument shows the existence of non-radiating sources for the Helmholtz equation in heterogeneous media.

The fact that $\mathcal{N}(\Omega,k) \subset N(\Omega,k)$ is a non-trivial space implies that the inverse source problem cannot be solved uniquely given boundary data at a single frequency. Moreover, we may show that data obtained from finitely many frequencies is not sufficient to recover $f$ uniquely. For simplicity, we assume constant coeffcients $a \equiv b \equiv h \equiv 1$. We want to show the existence of non-trivial sources that do not radiate at a finite number of distinct frequencies. This is done in a manner similar to that of the definition of the non-radiating sources found in the set $\mathcal{N}(\Omega,k)$ as given by (\ref{Eqn:NUMN.003}).

Let $L_{1} = \Delta + k_{1}^{2}$ be the Helmholtz operator at a frequency $k_{1}$. Similarly define $L_{2}$ for a frequency $k_{2} \neq k_{1}$. Now let $w \in C_{\rm c}^{\infty}(\Omega)$ be such that $L_{1} L_{2} w \neq 0$, and define $g = L_{1} L_{2} w$, $w_{1} = L_{2} w$ and $w_{2} = L_{1} w$. Notice that all three $g, w_{1}, w_{2} \in C_{\rm c}^{\infty}(\Omega)$. Also notice that the Helmholtz operators commute, that is, $ L_{1} L_{2} w = L_{2} L_{1} w = g$. Hence, the source $g$ is simultaneously non-radiating at the two distinct frequencies $k_{1}$ and $k_{2}$ because it gives rise to the wave fields $w_{1}$ at frequency $k_{1}$ and $w_{2}$ at frequency $k_{2}$ such that $\gamma w_{1} = \gamma w_{2} = 0$. This argument can be extended easily to an arbitrary finite number of frequencies. Hence, concerning the multi-frequency Inverse Source Problem \ref{Prob:InvSource}, boundary data obtained from finitely many frequencies is not sufficient to recover the unknown source $f$.

To conclude this section, we briefly address the concept of minimum-norm solutions for the inverse problem. Notice that if $f$ solves the inverse source problem at a frequency $k$, then $f + g$ for any $g \in N(\Omega,k)$ will solve it as well. Out of the infinitely many solutions to the inverse problem we can select the one that has minimum $L^{2}$-norm. This is a consequence of the well-known best approximation theorems for Hilbert spaces \cite{Deu2001}. Therefore, given an arbitrary source $f \in L^{2}(\Omega)$ there exists a unique decomposition $f = f_{N,k} + f_{P,k}$ with $f_{N,k} \in N(\Omega,k)$ and $f_{P,k} \in N(\Omega,k)^{\perp}$, such that $f_{P,k}$ is the minimum-norm solution for the inverse source problem associated with $f$ at a frequency $k$.


\section{A variational characterization of the unknown source} \label{Sec:VarCharcSource}

We now turn to the characterization of the set $N(\Omega,k)^{\perp}$. The approach follows Marengo and Devaney \cite{MarDev2004}, and Albanese and Monk \cite{AlbMonk2006}. For that reason, we define a variational problem whose solutions will be shown to be dense in $N(\Omega,k)^{\perp}$. This new problem will be called \emph{adjoint} to the original Direct Problem \ref{Prob:VariationalDtN}.

\begin{Problem}[Adjoint Problem] \label{Prob:AdjointProb}
Given boundary data $\eta \in L^{2}(\Gamma)$, find a function $\psi \in H^{1}(\Omega)$ satisfying
\begin{eqnarray}
&& A(\psi,\phi) = \la \eta , \gamma \phi \ra_{L^{2}(\Gamma)}, \qquad \text{for all} \quad \phi \in H^{1}(\Omega), \label{Eqn:NUMN.005}
\end{eqnarray}
where the sesquilinear form $A :  H^{1}(\Omega) \times  H^{1}(\Omega) \rightarrow \mathbb{C}$ is defined as follows,
\begin{eqnarray}
&& A(\psi,\phi) = \la a \nabla \psi , \nabla \phi \ra_{L^{2}(\Omega)} - k^2 \la b \psi , \phi \ra_{L^{2}(\Omega)} - \la \Lambda^{*} \gamma \psi , \gamma \phi \ra_{L^{2}(\Gamma)}.  \label{Eqn:NUMN.006}
\end{eqnarray}
Here $\gamma : H^{1}(\Omega) \rightarrow H^{1/2}(\Gamma)$ is the trace operator, and $\Lambda^{*} : H^{1/2}(\Gamma) \rightarrow H^{-1/2}(\Gamma)$ denotes the adjoint of the Dirichlet-to-Neumann operator. Notice also that $\psi$ depends on the frequency $k$, so we may write $\psi = \psi_{k}$.
\end{Problem}

Let us denote the set of solutions of the Adjoint Problem \ref{Prob:AdjointProb} by
\begin{eqnarray}
&& \mathcal{P}(\Omega,k) = \big\{ \psi \in H^{1}(\Omega) \text{ a solution of Problem \ref{Prob:AdjointProb} for some $\eta \in L^{2}(\Gamma)$} \nonumber \\
&& \qquad \qquad \qquad \text{ and fixed frequency $k$} \big\}, \label{Eqn:NUMN.007} \\
&& P(\Omega,k) = \clo{\mathcal{P}(\Omega,k)} \quad \text{(closure in the $L^{2}(\Omega)$-norm)}. \label{Eqn:NUMN.008}
\end{eqnarray}
In order for $\mathcal{P}(\Omega,k)$ to be well-defined it is necessary to show that the Adjoint Problem \ref{Prob:AdjointProb} is well-posed. This we do in the form of a lemma.
\begin{lemma} \label{Lemma:AdjProbWellPosed}
The Adjoint Problem \ref{Prob:AdjointProb} is well-posed.
\end{lemma}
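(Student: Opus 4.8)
The goal is to establish well-posedness (existence, uniqueness, stability) of the Adjoint Problem, which by the Lax–Milgram / Riesz–Fredholm machinery amounts to checking three things for the sesquilinear form $A$: boundedness, a Gårding-type inequality, and injectivity. The cleanest route is to mimic exactly the argument already used for the Direct Problem in Section \ref{Section:DirectProblem}, exploiting the fact that $A$ differs from $B$ only in that the operator $\Lambda$ appearing in the boundary term is replaced by its adjoint $\Lambda^{*}$. So the strategy is: (i) verify that $A$ is bounded on $H^{1}(\Omega)\times H^{1}(\Omega)$; (ii) decompose $A = A_{0} + A_{1}$ where $A_{0}(\psi,\phi) = \la a\nabla\psi,\nabla\phi\ra_{L^{2}(\Omega)} + \la \psi,\phi\ra_{L^{2}(\Omega)}$ is coercive and $A_{1}$ collects the remaining terms $-k^{2}\la b\psi,\phi\ra_{L^{2}(\Omega)} - \la\psi,\phi\ra_{L^{2}(\Omega)} - \la \Lambda^{*}\gamma\psi,\gamma\phi\ra_{L^{2}(\Gamma)}$, and show $A_{1}$ is associated with a compact operator on $H^{1}(\Omega)$; (iii) invoke the Riesz–Fredholm alternative to reduce existence and stability to uniqueness; (iv) prove uniqueness by relating a homogeneous solution of the Adjoint Problem to the homogeneous Direct Problem.

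First I would record boundedness of $A$: the volume terms are bounded because $a,b\in L^{\infty}(\Omega)$ and the trace operator $\gamma$ is continuous from $H^{1}(\Omega)$ to $H^{1/2}(\Gamma)$, while $\Lambda : H^{1/2}(\Gamma)\to H^{-1/2}(\Gamma)$ is bounded, hence so is its adjoint $\Lambda^{*} : H^{1/2}(\Gamma)\to H^{-1/2}(\Gamma)$ (identifying $H^{1/2}$ and $H^{-1/2}$ as duals via the $L^{2}(\Gamma)$ pairing), so the boundary term is controlled by $\|\gamma\psi\|_{H^{1/2}(\Gamma)}\|\gamma\phi\|_{H^{1/2}(\Gamma)} \lesssim \|\psi\|_{H^{1}(\Omega)}\|\phi\|_{H^{1}(\Omega)}$. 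Then, following the Direct Problem verbatim, $A_{0}$ is coercive by the uniform ellipticity $a\ge a_{0}>0$, and $A_{1}$ corresponds to a compact operator because the embeddings $H^{1}(\Omega)\hookrightarrow L^{2}(\Omega)$ and $H^{1/2}(\Gamma)\hookrightarrow H^{-1/2}(\Gamma)$ (equivalently $H^{1/2}(\Gamma)\hookrightarrow L^{2}(\Gamma)$) are compact; this is the same mechanism that made $Q_{k}$ compact in \eqref{Eqn:RR.002}. With coercive-plus-compact structure, the analytic/classical Fredholm alternative says the Adjoint Problem is well-posed if and only if the only solution with $\eta = 0$ is $\psi = 0$.

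The only genuinely new point is the uniqueness step, and this is where I expect the main (though still modest) obstacle to lie. Take $\psi_{k}\in H^{1}(\Omega)$ with $A(\psi_{k},\phi) = 0$ for all $\phi\in H^{1}(\Omega)$. The form $A$ is precisely the sesquilinear form obtained from $B$ by taking the complex-conjugate transpose of the boundary operator, so $A(\psi,\phi) = \overline{B(\phi,\psi)}$ — that is, $A$ is the adjoint form of $B$ in the sense that $\la A\psi,\phi\ra = \la\psi, B\phi\ra$ where $B$ is the bounded operator on $H^{1}(\Omega)$ representing the form of the Direct Problem. Since the Direct Problem is well-posed (Theorem \ref{Thm:WellPosed}), that operator $B$ is boundedly invertible, hence so is its Hilbert-space adjoint, which forces $\psi_{k} = 0$. (Alternatively, and more concretely, one can argue that a nontrivial $\psi_{k}$ with vanishing right-hand side would, via the jump relations and the definition of $\Lambda^{*}$, extend to a nontrivial outgoing solution of the homogeneous exterior Helmholtz problem with an appropriately conjugated radiation condition, contradicting uniqueness for that exterior problem; but the operator-adjoint argument is cleaner since it just reuses Theorem \ref{Thm:WellPosed}.) The delicate bookkeeping here is making the identification $A(\psi,\phi) = \overline{B(\phi,\psi)}$ precise, which requires that $\la\Lambda^{*}\gamma\psi,\gamma\phi\ra_{L^{2}(\Gamma)} = \overline{\la\Lambda\gamma\phi,\gamma\psi\ra_{L^{2}(\Gamma)}}$, i.e. that $\Lambda^{*}$ really is the $L^{2}(\Gamma)$-adjoint of $\Lambda$ — which is exactly how $\Lambda^{*}$ was defined in Problem \ref{Prob:AdjointProb}. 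Once that identity is in hand, existence, uniqueness and stability all follow, completing the proof.
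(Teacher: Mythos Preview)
Your proposal is correct and follows the same approach as the paper. The paper's proof is a one-sentence appeal to Fredholm theory---well-posedness of the Direct Problem \ref{Prob:VariationalDtN} implies well-posedness of its adjoint---citing \cite{McLean2000}; you unpack precisely this argument by verifying the coercive-plus-compact structure of $A$ and then using the identity $A(\psi,\phi)=\overline{B(\phi,\psi)}$ to transfer injectivity (hence invertibility) from the Direct Problem's operator to its Hilbert-space adjoint.
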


\begin{proof}
From the Fredholm theory, one obtains that the well-posedness of the Direct Problem \ref{Prob:VariationalDtN} implies the well-posedness of the Adjoint Problem \ref{Prob:AdjointProb} and vice versa. For details see \cite{McLean2000}.
\end{proof}

As done in \cite{AlbMonk2006} for Maxwell equations, the Adjoint Problem \ref{Prob:AdjointProb} can be used to obtain a variational characterization of the unknown source $f$. This is done by developing a link between $f$ and the boundary measurements $\gamma u$ on the surface $\Gamma$. Let $\psi \in H^{1}(\Omega)$ be a solution to the Adjoint Problem \ref{Prob:AdjointProb} for some $\eta \in L^{2}(\Gamma)$. Then
\begin{eqnarray}
&& A(\psi , \phi) = \la \eta , \gamma \phi \ra_{L^{2}(\Gamma)}, \qquad \text{for all} \quad \phi \in H^{1}(\Omega), \label{Eqn:RFT.009}
\end{eqnarray}
Denote by $u \in H^{1}(\Omega)$, the solution to the original Direct Problem \ref{Prob:VariationalDtN} for the unknown source $f$. Then we also have,
\begin{eqnarray}
&& B(u , v) = h \la f , v \ra_{L^{2}(\Omega)}, \qquad \text{for all} \quad v \in H^{1}(\Omega).  \label{Eqn:RFT.011}
\end{eqnarray}

Now, we let $\phi = u$ and $v = \psi$, and the combination of (\ref{Eqn:RFT.009}) and (\ref{Eqn:RFT.011}) renders,
\begin{eqnarray}
h \la f , \psi \ra_{L^{2}(\Omega)} = \la \gamma u , \eta \ra_{L^{2}(\Gamma)}, \qquad \text{for all} \quad \psi \in \mathcal{P}(\Omega,k). \label{Eqn:RFT.015}
\end{eqnarray}
The above variational characterization of the unknown source $f$ can be employed to obtain the projection of $f$ on the space $P(\Omega,k)$. This is important as we shall prove in Theorem \ref{Thm:NPperp} below that $P(\Omega,k)$ is in fact the set of all purely-radiating sources $N(\Omega,k)^{\perp}$. In other words, expression (\ref{Eqn:RFT.015}) may be used to compute the minimum-norm solution of the inverse source problem.

The definition of the set $P(\Omega,k)$ as a closed subspace of $L^{2}(\Omega)$ allows us to establish another orthogonal decomposition
\begin{eqnarray}
L^{2}(\Omega) = P(\Omega,k) \oplus P(\Omega,k)^{\perp}. \label{Eqn:NUMN.010}
\end{eqnarray}
Now we proceed to characterize the space $N(\Omega,k)^{\perp}$ by proving that $N(\Omega,k)^{\perp} = P(\Omega,k)$ or equivalently that $N(\Omega,k) = P(\Omega,k)^{\perp}$
\begin{theorem} \label{Thm:NPperp}
A function $f \in L^{2}(\Omega)$ belongs to $P(\Omega,k)^{\perp}$ if and only if it belongs to $N(\Omega,k)$.
\end{theorem}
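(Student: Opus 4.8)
The plan is to prove the two inclusions $P(\Omega,k)^\perp \subseteq N(\Omega,k)$ and $N(\Omega,k) \subseteq P(\Omega,k)^\perp$ separately, using the variational identity (\ref{Eqn:RFT.015}) as the bridge between the adjoint solutions and the boundary traces of the radiated field. Throughout I assume, as established above, that the Direct Problem \ref{Prob:VariationalDtN} and the Adjoint Problem \ref{Prob:AdjointProb} are both well-posed, and that $h=h_k\neq 0$.

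First I would show $N(\Omega,k) \subseteq P(\Omega,k)^\perp$. Let $f \in N(\Omega,k)$, so the associated direct solution $u_k$ satisfies $\gamma u_k = 0$ on $\Gamma$. Taking any $\psi \in \mathcal{P}(\Omega,k)$, identity (\ref{Eqn:RFT.015}) gives $h \la f,\psi\ra_{L^2(\Omega)} = \la \gamma u_k, \eta\ra_{L^2(\Gamma)} = 0$; since $h\neq 0$ this forces $\la f,\psi\ra_{L^2(\Omega)}=0$ for every $\psi \in \mathcal{P}(\Omega,k)$, and hence for every element of its $L^2$-closure $P(\Omega,k)$. Thus $f \perp P(\Omega,k)$, i.e. $f \in P(\Omega,k)^\perp$. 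Taking closures is legitimate because the inner product is continuous.

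For the reverse inclusion $P(\Omega,k)^\perp \subseteq N(\Omega,k)$, let $f \in L^2(\Omega)$ satisfy $\la f,\psi\ra_{L^2(\Omega)}=0$ for all $\psi \in \mathcal{P}(\Omega,k)$, and let $u_k$ be the corresponding direct solution. Then (\ref{Eqn:RFT.015}) yields $\la \gamma u_k,\eta\ra_{L^2(\Gamma)} = h\la f,\psi\ra_{L^2(\Omega)} = 0$ for every $\eta \in L^2(\Gamma)$ (with $\psi$ the adjoint solution for that datum $\eta$). Since $\gamma u_k \in H^{1/2}(\Gamma) \subseteq L^2(\Gamma)$ and the pairing vanishes against all $\eta \in L^2(\Gamma)$, a dense subset of $L^2(\Gamma)$ pairing argument gives $\gamma u_k = 0$ in $L^2(\Gamma)$, hence on $\Gamma$; therefore $f \in N(\Omega,k)$ by Definition \ref{Def:Nonrad}.

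The main obstacle I anticipate is making the ``combination of (\ref{Eqn:RFT.009}) and (\ref{Eqn:RFT.011}) renders (\ref{Eqn:RFT.015})'' step fully rigorous, i.e. verifying that substituting $\phi=u$, $v=\psi$ and subtracting the two sesquilinear identities really does collapse the interior terms $\la a\nabla u,\nabla\psi\ra - k^2\la bu,\psi\ra$ against their counterparts and leaves exactly the boundary relation $\la \Lambda\gamma u,\gamma\psi\ra_{L^2(\Gamma)} = \la \Lambda^*\gamma\psi,\gamma u\ra_{L^2(\Gamma)}$; this is precisely where the definition of $\Lambda^*$ as the adjoint of $\Lambda$ (with respect to the $L^2(\Gamma)$ pairing, suitably interpreted on the $H^{\pm 1/2}$ duality) must be invoked. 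One must also be slightly careful that the $L^2(\Gamma)$ brackets in (\ref{Eqn:RFT.015}) are consistent with the $H^{-1/2}$–$H^{1/2}$ dualities appearing in $B$ and $A$; since $\gamma u, \gamma\psi \in H^{1/2}(\Gamma)$ and $\eta \in L^2(\Gamma)$, all pairings are in fact genuine $L^2(\Gamma)$ inner products, so no conjugation ambiguity arises beyond the usual sesquilinearity bookkeeping. Once identity (\ref{Eqn:RFT.015}) is in hand, both inclusions are essentially immediate, as sketched above.
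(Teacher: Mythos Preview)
Your argument is correct and follows essentially the same route as the paper: both directions hinge on the variational identity (\ref{Eqn:RFT.015}), with the inclusion $N(\Omega,k)\subseteq P(\Omega,k)^\perp$ obtained by density exactly as you do. For the reverse inclusion the paper makes the single choice $\eta=\gamma u_k$ to read off $\|\gamma u_k\|_{L^2(\Gamma)}^2=0$ directly, whereas you test against all $\eta\in L^2(\Gamma)$; these are equivalent, and your version is fine (note that your ``dense subset'' remark is unnecessary since you already have the full space $L^2(\Gamma)$).
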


\begin{proof}
Let $f \in L^{2}(\Omega)$ be arbitrary. The proof relies on the variational characterization (\ref{Eqn:RFT.015}) where $u \in H^{1}(\Omega)$ is the unique solution to the Direct Problem \ref{Prob:VariationalDtN}.

Now assume that $f \in P(\Omega,k)^{\perp}$. In (\ref{Eqn:RFT.015}) let $\eta = \gamma u \in H^{1/2}(\Gamma) \subset L^{2}(\Gamma)$ and $\psi \in \mathcal{P}(\Omega,k)$ be the associated solution of the Adjoint Problem \ref{Prob:AdjointProb}. Then it follows that $\gamma u = 0$ which means that $f \in N(\Omega,k)$.

Conversely, if $f \in N(\Omega,k)$ then $\gamma u = 0$ by Definition \ref{Def:Nonrad}. It follows from (\ref{Eqn:RFT.015}) that $\la f , \psi \ra_{L^{2}(\Omega)}=0$ for all $\psi \in \mathcal{P}(\Omega,k)$. Hence $f \in P(\Omega,k)^{\perp}$ since $\mathcal{P}(\Omega,k)$ is dense in $P(\Omega,k)$.
\end{proof}

In summary, we conclude from Theorem \ref{Thm:NPperp} that the orthogonal decompositions (\ref{Eqn:NUMN.001}) and (\ref{Eqn:NUMN.010}) are the same because $N(\Omega,k) = P(\Omega,k)^{\perp}$ and $P(\Omega,k) = N(\Omega,k)^{\perp}$. In addition, the variational characterization (\ref{Eqn:RFT.015}) can be employed to obtain the projection of the unknown source $f$ on the space $P(\Omega,k)$ and this projection coincides with the minimum-norm solution of the inverse source problem. Finally, we wish to mention that the Adjoint Problem \ref{Prob:AdjointProb} is just the variational formulation of the following boundary value problem,
\begin{eqnarray}
&& \nabla \cdot a \nabla \psi + k^2 b \psi = 0 \quad \text{in} \quad \Omega, \nonumber \\
&& \partial_{\nu} \psi - \Lambda^{*} \psi = \eta \quad \text{on} \quad \Gamma. \nonumber
\end{eqnarray}
This means that purely-radiating sources for the Helmholtz equation are themselves (approximate) weak solutions to the Helmholtz equation. This is the subject of several papers \cite{AlbMonk2006,MarDevZio2000,MarZiol1999,MarDev2004}.


\section{Multi-frequency uniqueness} \label{Sec:MultiFreqUnique}

The variational characterization (\ref{Eqn:RFT.015}) has great practical as well as theoretical value. It will be used in Section \ref{Sec:Algorithm} to derive a reconstruction algorithm. However, we will employ it here to establish the uniqueness property for the Inverse Source Problem \ref{Prob:InvSource} for measurements of the wave field $u_{k}$ on the surface $\Gamma$ at many frequencies $k \in \mathcal{K}$. The goal is to prove that the unknown source $f$ can be identified uniquely if $\mathcal{K}$ contains a sequence with a limit point. This was done by Bao \emph{et al.} \cite{BaoLinTri2010} for the Helmholtz equation in homogeneous media. From the fact that plane waves satisfy the Helmholtz equation, they were able to use Green's second identity to obtain the Fourier transform of the unknown source $f$ in terms of multi-frequency boundary measurements. Since $f$ has compact support then its Fourier transform is analytic and completely determined by data with an accumulation point.

In our present study for the Helmholtz equation in heterogeneous media, plane waves no longer satisfy the governing equation. Instead of using the analyticity of the Fourier transform of $f$, we already managed to directly prove that the radiating wave field $u_{k}$ is analytic with respect to the frequency $k>0$ (see Theorem \ref{Thm:Analyt}). As seen below, this will have the same effect on establishing uniqueness for the inverse source problem for multi-frequency data with an accumulation point, even in the case of heterogeneous media.

Recall the variational characterization (\ref{Eqn:RFT.015}) for the source $f \in L^{2}(\Omega)$,
\begin{eqnarray}
h_{k} \la f , \psi_{k} \ra_{L^{2}(\Omega)} = \la \gamma u_{k} , \eta_{k} \ra_{L^{2}(\Gamma)}, \qquad \text{for all} \quad \psi_{k} \in \mathcal{P}(\Omega,k). \nonumber
\end{eqnarray}
Here $\psi_{k} \in H^{1}(\Omega)$ represents any arbitrary solution of the Adjoint Problem \ref{Prob:AdjointProb} for some boundary data $\eta_{k} \in L^{2}(\Gamma)$ and $k>0$. In fact, we may carefully choose a frequency $k$ and the field $\psi_{k}$ so that they weakly solve the following eigenvalue problem,
\begin{eqnarray}
&& \nabla \cdot a \nabla \psi = - \lambda b \psi  \quad \text{in} \quad \Omega, \label{Eqn.SLP1} \\
&& \psi = 0 \quad \text{on} \quad \Gamma. \label{Eqn.SLP2}
\end{eqnarray}
This is a particular case of the well-known regular Sturm-Liouville problem \cite{RenRog2004} from which we learn that there is a countable set of eigenpairs $(\psi_{j},\lambda_{j})_{j=1}^{\infty}$ such that $\lambda_{j}>0$ and the eigenfunctions form an orthonormal basis of the Hilbert space $L^{2}(\Omega,b)$ with respect to the inner product $\la f , g \ra_{L^{2}(\Omega,b)} = \la f , g b \ra_{L^{2}(\Omega)}$. Recall that $0 < b , b^{-1} \in L^{\infty}(\Omega)$.

The above convenient choice of Dirichlet eigenfunctions combined with expression (\ref{Eqn:RFT.015}) yields the unique determination of the unknown source $f$ just as in the reconstruction algorithm devised in \cite{EllVal2009} for homogeneous media. In fact, if $f \in L^2(\Omega)$, then $f b^{-1} \in L^2(\Omega,b)$. Then we have a generalized Fourier expansion,
\begin{eqnarray}
f b^{-1} = \sum_{j=1}^{\infty} \alpha_{j} \psi_{j}, \label{Eqn.FourierExp1}
\end{eqnarray}
where $ \alpha_{j} = \la f b^{-1} , \psi_{j} \ra_{L^{2}(\Omega,b)} = \la f b^{-1}, \psi_{j} b \ra_{L^{2}(\Omega)} = \la f, \psi_{j} \ra_{L^{2}(\Omega)} = h_{j}^{-1} \la \gamma u_{j} , \eta_{j} \ra_{L^{2}(\Gamma)}$. Here $\eta_{j} = \partial_{\nu} \psi_{j}$, $u_{j}$ is the solution of the Direct Problem \ref{Prob:VariationalDtN} for a frequency $k_{j} = \lambda_{j}^{1/2}$ and $h_{j} = h(k_{j})$. At this point, we are ready to state and prove the uniqueness result for the multi-frequency inverse source problem with heterogeneous media.

\begin{lemma} \label{Lemma:UniqueInv}
Let $\mathcal{K} = \{ \lambda_{j}^{1/2} \}_{j=1}^{\infty}$ where $\lambda_{j}$ are the eigenvalues of the Sturm-Liouville problem (\ref{Eqn.SLP1})-(\ref{Eqn.SLP2}). Suppose that $f^{(1)}, f^{(2)} \in L^{2}(\Omega)$ are two sources such that their radiated waves (solutions of the Direct Problem \ref{Prob:VariationalDtN}) coincide on the surface $\Gamma$ for all frequencies in $\mathcal{K}$. Then $f^{(1)}=f^{(2)}$.
\end{lemma}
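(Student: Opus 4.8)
The plan is to exploit the generalized Fourier expansion already set up just before the statement. Given the two sources $f^{(1)}, f^{(2)} \in L^2(\Omega)$, set $f = f^{(1)} - f^{(2)}$. By linearity of the Direct Problem, the radiated wave corresponding to $f$ is $u_k = u_k^{(1)} - u_k^{(2)}$, and the hypothesis gives $\gamma u_k = 0$ on $\Gamma$ for every $k \in \mathcal{K} = \{\lambda_j^{1/2}\}_{j=1}^\infty$. It then suffices to show $f = 0$.

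First I would invoke the Sturm--Liouville expansion: since $0 < b, b^{-1} \in L^\infty(\Omega)$, the function $f b^{-1}$ lies in $L^2(\Omega,b)$ and admits the expansion $f b^{-1} = \sum_{j=1}^\infty \alpha_j \psi_j$ with coefficients $\alpha_j = \la f, \psi_j \ra_{L^2(\Omega)}$ as computed in the text. Next, for each fixed $j$, apply the variational characterization (\ref{Eqn:RFT.015}) at the frequency $k_j = \lambda_j^{1/2}$ with the particular adjoint solution $\psi_k = \psi_j$ (which solves the Adjoint Problem with boundary data $\eta_j = \partial_\nu \psi_j \in L^2(\Gamma)$, because a Dirichlet eigenfunction of the Sturm--Liouville operator is in particular a weak solution of the Helmholtz equation in $\Omega$ vanishing on $\Gamma$, hence an admissible adjoint field). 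This yields
\begin{eqnarray}
h_j \alpha_j = h_j \la f , \psi_j \ra_{L^2(\Omega)} = \la \gamma u_j , \eta_j \ra_{L^2(\Gamma)} = 0, \nonumber
\end{eqnarray}
where the last equality is exactly the hypothesis $\gamma u_j = 0$. Since $h_j = h(k_j) \neq 0$ by assumption on $h$, we conclude $\alpha_j = 0$ for every $j$, hence $f b^{-1} = 0$ in $L^2(\Omega,b)$, hence $f = 0$ in $L^2(\Omega)$, i.e. $f^{(1)} = f^{(2)}$.

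The one point that requires care — and which I expect to be the main obstacle to state cleanly rather than technically hard — is the justification that each Dirichlet eigenfunction $\psi_j$ of (\ref{Eqn.SLP1})--(\ref{Eqn.SLP2}) genuinely belongs to $\mathcal{P}(\Omega,k_j)$, the solution set of the Adjoint Problem, so that (\ref{Eqn:RFT.015}) may legitimately be tested against it. This amounts to checking that $\psi_j$ satisfies $A(\psi_j, \phi) = \la \eta_j, \gamma \phi\ra_{L^2(\Gamma)}$ for all $\phi \in H^1(\Omega)$ with the choice $\eta_j = \partial_\nu \psi_j$; since $\gamma \psi_j = 0$ the boundary term involving $\Lambda^*$ drops out, and the identity reduces to the weak form of $\nabla\cdot a\nabla\psi_j + k_j^2 b\,\psi_j = 0$ together with the definition of the conormal derivative $\partial_\nu\psi_j$ via Green's formula. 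Once this bookkeeping is in place the rest is immediate. I would also remark that this lemma only uses countably many frequencies forming the Sturm--Liouville spectrum; the passage to an arbitrary frequency set with an accumulation point (via the analyticity Theorem \ref{Thm:Analyt}) is presumably the content of the theorem that follows.
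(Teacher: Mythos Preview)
Your proposal is correct and follows essentially the same approach as the paper: reduce by linearity to $f=f^{(1)}-f^{(2)}$ with vanishing boundary traces at each $k_j=\lambda_j^{1/2}$, then use the variational identity (\ref{Eqn:RFT.015}) with the Dirichlet eigenfunctions $\psi_j$ to kill all generalized Fourier coefficients of $fb^{-1}$. Your discussion of why $\psi_j\in\mathcal{P}(\Omega,k_j)$ (via $\gamma\psi_j=0$ eliminating the $\Lambda^{*}$ term and Green's formula identifying $\eta_j=\partial_\nu\psi_j$) is in fact more explicit than what the paper provides, which simply asserts the choice in the paragraph preceding the lemma.
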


\begin{proof}
Let $u_{j}^{(1)}$ and $u_{j}^{(2)}$ be the wave fields radiated at frequencies $k_{j} = \lambda_{j}^{1/2} \in \mathcal{K}$ by $f^{(1)}$ and $f^{(2)}$, respectively.
Set $f = f^{(1)} - f^{(2)}$ and $u_{j} = u_{j}^{(1)}-u_{j}^{(2)}$, and notice that by the linearity of the Helmholtz equation, then $u_{j}$ is the solution of the Direct Problem \ref{Prob:VariationalDtN} for the source $f$. The goal is to show that $f=0$. Since the wave fields $u_{j}^{(1)}$ and $u_{j}^{(2)}$ coincide on the surface $\Gamma$, then we have that $\gamma u_{j} = 0$ for all $j \in \mathbb{N}$.  Now, from the generalized Fourier expansion (\ref{Eqn.FourierExp1}) of the function $f b^{-1}$ we obtain that all of its Fourier coefficients are $\alpha_{j} = h_{j}^{-1} \la \gamma u_{j} , \eta_{j} \ra_{L^{2}(\Gamma)} = 0$. Hence $f = 0$ as desired.
\end{proof}

The above uniqueness result can be improved by exploiting the analyticity of a solution $u_{k}$ of the Direct Problem \ref{Prob:VariationalDtN} with respect to the frequency $k > 0$ as stated in Theorem \ref{Thm:Analyt}.

\begin{theorem} \label{Thm:UniqueInv}
Let $\mathcal{K} \subset \mathbb{R}_{+}$ have an accumulation point and $h=h(k)$ be analytic. Suppose that $f^{(1)}, f^{(2)} \in L^{2}(\Omega)$ are two sources such that their radiated waves (solutions of the Direct Problem \ref{Prob:VariationalDtN}) coincide on the surface $\Gamma$ for all frequencies $k \in \mathcal{K}$. Then $f^{(1)}=f^{(2)}$.
\end{theorem}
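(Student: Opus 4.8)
The plan is to reduce the theorem to Lemma~\ref{Lemma:UniqueInv} by an analytic continuation argument in the frequency variable. As in the proof of that lemma, set $f = f^{(1)} - f^{(2)} \in L^2(\Omega)$ and, for each $k \in \mathbb{R}_+$, let $u_k$ denote the solution of the Direct Problem~\ref{Prob:VariationalDtN} with source $f$; by linearity $u_k = u_k^{(1)} - u_k^{(2)}$. The hypothesis says $\gamma u_k = 0$ on $\Gamma$ for all $k \in \mathcal{K}$, and the goal is to upgrade this to $\gamma u_k = 0$ for \emph{all} $k > 0$, in particular for all $k = \lambda_j^{1/2}$ with $\lambda_j$ a Dirichlet eigenvalue of \eqref{Eqn.SLP1}--\eqref{Eqn.SLP2}, at which point Lemma~\ref{Lemma:UniqueInv} forces $f = 0$.

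The key step is to produce, from the family $\{\gamma u_k\}_{k>0}$, a genuine scalar-valued analytic function of $k$ that vanishes on a set with an accumulation point, so that the identity theorem applies. To do this I would fix an arbitrary element $\eta \in L^2(\Gamma)$ (or, more concretely, test against a fixed smooth function on $\Gamma$) and consider the map $k \mapsto \la \gamma u_k, \eta \ra_{L^2(\Gamma)}$. By Theorem~\ref{Thm:Analyt}, since $h$ is analytic, $k \mapsto u_k$ is analytic from $(0,\infty)$ into $H^1(\Omega)$; composing with the bounded trace operator $\gamma : H^1(\Omega) \to H^{1/2}(\Gamma) \hookrightarrow L^2(\Gamma)$ and then with the bounded linear functional $\la \,\cdot\,, \eta\ra_{L^2(\Gamma)}$ yields a scalar analytic function $\Phi_\eta(k) := \la \gamma u_k, \eta\ra_{L^2(\Gamma)}$ on $(0,\infty)$. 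Since $\Phi_\eta$ vanishes on $\mathcal{K}$, which has an accumulation point in $\mathbb{R}_+$, the identity theorem for real-analytic functions (or: extend to a complex neighborhood, where analyticity was actually established, and apply the complex identity theorem) gives $\Phi_\eta(k) = 0$ for every $k > 0$. As $\eta \in L^2(\Gamma)$ was arbitrary, $\gamma u_k = 0$ in $L^2(\Gamma)$, hence on $\Gamma$, for every $k > 0$.

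Having shown $f$ is non-radiating at every frequency, I would then simply invoke Lemma~\ref{Lemma:UniqueInv}: in particular $\gamma u_{k_j} = 0$ at each $k_j = \lambda_j^{1/2}$, so every generalized Fourier coefficient $\alpha_j = h_j^{-1}\la \gamma u_j, \eta_j\ra_{L^2(\Gamma)}$ of $f b^{-1}$ in the expansion \eqref{Eqn.FourierExp1} vanishes, whence $f b^{-1} = 0$ and therefore $f = 0$ since $b^{-1} \in L^\infty(\Omega)$ is strictly positive. This gives $f^{(1)} = f^{(2)}$.

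The main obstacle is purely a matter of bookkeeping rather than substance: one must be careful that the analyticity in Theorem~\ref{Thm:Analyt} is with respect to the \emph{original} formulation on $\Omega$ (it was proved via the enlarged domain $\tilde\Omega$ and the modified DtN map $\tilde\Lambda_k$), and that the trace on $\Gamma$ of the resulting $u_k$ inherits analyticity — this is immediate since $\Gamma \subset \tilde\Omega$ and the restriction $H^1_{\mathrm{loc}}(\tilde\Omega) \to H^1(\Omega) \to H^{1/2}(\Gamma)$ is bounded linear. A minor point is that Theorem~\ref{Thm:Analyt} is stated for $k > 0$ real; to apply the identity theorem cleanly one either works directly with real-analytic functions on the interval $(0,\infty)$ (the accumulation point lies in $\mathbb{R}_+$, so this suffices) or notes that the construction in Lemma~\ref{Lemma:AnalytDtN} in fact gives a complex-analytic extension to a neighborhood of $\mathbb{R}_+$, and applies the complex identity theorem there. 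Either way the argument is short once Theorem~\ref{Thm:Analyt} and Lemma~\ref{Lemma:UniqueInv} are in hand.
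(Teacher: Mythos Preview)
Your proposal is correct and follows essentially the same route as the paper: use Theorem~\ref{Thm:Analyt} to get analyticity of $k \mapsto u_k$, apply the identity theorem to conclude $\gamma u_k = 0$ for all $k>0$, and then invoke Lemma~\ref{Lemma:UniqueInv}. Your extra step of pairing with a fixed $\eta \in L^2(\Gamma)$ to reduce to scalar-valued analytic functions is a clean way to make the identity-theorem step precise, though one could equally appeal directly to the identity theorem for Banach-space-valued analytic maps.
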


\begin{proof}
The result follows from the fact that the solution $u_{k}$ of the Direct Problem \ref{Prob:VariationalDtN} is analytic with respect to $k \in \mathbb{R}_{+}$ as stated in Theorem \ref{Thm:Analyt}. Therefore, the knowledge of the traces $\gamma u_{k}$ for $k \in \mathcal{K}$ determines $\gamma u_{k}$ in a neighborhood of the accumulation point of $\mathcal{K}$ \cite{Rudin1987}. By the standard analytic continuation principle, then $\gamma u_{k}$ is uniquely determined for all $k > 0$ and the result follows from Lemma \ref{Lemma:UniqueInv}.
\end{proof}


\section{Reconstruction method} \label{Sec:Algorithm}

Now we propose a reconstruction algorithm based on a simple projection method in the Hilbert space setting \cite{Deu2001}. This method fits perfectly with the variational characterization (\ref{Eqn:RFT.015}) of the unknown source. We assume that the measured data is available for a finite set of frequencies $\mathcal{K} = \{ k_{1} < k_{2} < ... < k_{J}\}$. In this case, the reconstructed source can be characterized as the minimum-norm function from the intersection of $J$ affine subspaces of $L^{2}(\Omega)$. The variational characterization of the source is employed in connection with projection methods to obtain an approximation to the unknown source.

Recall the variational characterization (\ref{Eqn:RFT.015}) for the source $f \in L^{2}(\Omega)$,
\begin{eqnarray}
h_{k} \la f , \psi_{k} \ra_{L^{2}(\Omega)} = \la \gamma u_{k} , \eta_{k} \ra_{L^{2}(\Gamma)}, \qquad \text{for all} \quad \psi_{k} \in \mathcal{P}(\Omega,k). \nonumber
\end{eqnarray}
Here $\psi_{k} \in H^{1}(\Omega)$ represents any arbitrary solution of the Adjoint Problem \ref{Prob:AdjointProb} for some boundary data $\eta_{k} \in L^{2}(\Gamma)$ and $k>0$. For each fixed $k >0$, one may be tempted to sample the space $L^{2}(\Gamma)$ with many choices of $\eta_{k}$ in order to obtain a good number of associated test functions $\psi_{k}$. Instead, we consider the follow decomposition
\begin{eqnarray}
L^{2}(\Gamma) = \text{span} \{ \gamma u_{k} \} \oplus \text{span} \{\gamma u_{k} \}^{\perp}, \label{Eqn.DecompEta}
\end{eqnarray}
which reveals the fact that data from $\text{span} \{\gamma u_{k} \}^{\perp}$ will not contribute with new information to recover $f$ from the use of the variational characterization (\ref{Eqn:RFT.015}). Hence, we simply choose $\eta_{k} = \gamma u_{k}$ which is the measured data. The goal is to find $f \in L^{2}(\Omega)$ that satisfies
\begin{eqnarray}
\la f , \psi_{j} \ra_{L^{2}(\Omega)} = h_{j}^{-1} \| \gamma u_{j} \|_{L^{2}(\Gamma)}^{2}, \quad j=1,2,...,J, \label{Eqn.System01}
\end{eqnarray}
where $\gamma u_{j}$ are the measurements on the surface $\Gamma$ and $\psi_{j}$ is the (computed) solution of the Adjoint Problem \ref{Prob:AdjointProb} for frequencies $k_{j}$, and $h_{j} = h(k_{j})$ for $j=1,2,...,J$. Setting,
\begin{eqnarray}
\mathcal{H}_{j} = \bigg\{ g \in L^{2}(\Omega) ~|~ \la g , \psi_{j} \ra_{L^{2}(\Omega)} = h_{j}^{-1} \| \gamma u_{j} \|_{L^{2}(\Gamma)}^{2}  \bigg\}, \quad \mbox{and} \quad
\mathcal{H} = \bigcap_{j=1}^{J} \mathcal{H}_{j}, \nonumber
\end{eqnarray}
we want to find a function in $\mathcal{H}$ that solves the system (\ref{Eqn.System01}). Out of the infinitely many solutions, we seek the one with minimum $L^{2}(\Omega)$-norm which we denote by $f_{\rm min}$. It follows from the best approximation theory \cite{Deu2001} that
\begin{eqnarray}
f_{\rm min} = \sum_{i=1}^{J} \alpha_{i} \psi_{i} \label{Eqn.MinMulti}
\end{eqnarray}
where the scalars $\alpha_{i}$ satisfy the normal equations
\begin{eqnarray}
\sum_{i=1}^{J} \alpha_{i} \la \psi_{i} , \psi_{j} \ra_{L^{2}(\Omega)} = h_{j}^{-1} \| \gamma u_{j} \|_{L^{2}(\Gamma)}^{2}, \quad j=1,2,...,J,  \label{Eqn.NormalEqns}
\end{eqnarray}
provided that the functions $\psi_{j}$ are linearly independent. The latter can be shown as follows. Consider,
\begin{eqnarray}
\sum_{j=1}^{J} \beta_{j} \psi_{j} = 0. \nonumber
\end{eqnarray}
Apply the gradient operator, multiply by the variable coefficient $a(x)$ and perform the $L^{2}(\Omega)$ inner-product against $\nabla \phi$ where $\phi \in H_{0}^{1}(\Omega)$ to obtain
\begin{eqnarray}
\sum_{j=1}^{J} \beta_{j} \la a \nabla \psi_{j} , \nabla \phi \ra_{L^{2}(\Omega)} = \sum_{j=1}^{J} \beta_{j} k_{j}^{2} \la b \psi_{j} , \phi \ra_{L^{2}(\Omega)} = 0, \quad \text{for all $\phi \in H_{0}^{1}(\Omega)$.} \nonumber
\end{eqnarray}
Since $H_{0}^{1}(\Omega)$ is dense in $L^{2}(\Omega)$ and $b(x) > 0$, we obtain that $\sum_{j=1}^{J} \beta_{j} ~ k_{j}^{2} ~\psi_{j} = 0$. The above steps can be applied as many times as desired to obtain $\sum_{j=1}^{J} \beta_{j} ~ k_{j}^{2 m} ~\psi_{j} = 0$, for all $m \in \mathbb{N}$. Since $0 < k_{1} < k_{2} < ... < k_{J}$, we may divide by $k_{J}^{2m}$ and the limit as $m \to \infty$ yields that $\beta_{J} = 0$. Similarly, we obtain that $\beta_{j} = 0$ for all $j=1,2,...,J$. This establishes the linear independence of the adjoint fields $\psi_{j}$.

\subsection{Numerical examples} \label{Sec:Examples}
Let $\Omega = B(0,2)$ the open ball with center at the origin and radius equal to $2$, and  $h(k) \equiv 1$. We will only illustrate spherically symmetric examples, so we work with the radial variable $r = |x|$. We consider two sources
\begin{eqnarray*}
\fl f^{(1)}(r) = 
\cases{
1 & \text{if $r \leq \frac{1}{2}$,} \\
0 & \text{otherwise,}
} \quad \text{and} \quad 
f^{(2)}(r) = 
\cases{
1 + \cos (2 \pi r) & \text{if $r \leq \frac{1}{2}$,} \\
0 & \text{otherwise.}
}
\end{eqnarray*}
The characteristics of the medium are modelled by the coefficients,
\begin{eqnarray*}
\fl a(r) = 
\cases{
\text{$\frac{1}{2}$} & \text{if $\frac{3}{4} \leq r \leq 1$,} \\
0 & \text{otherwise,}
} \quad \text{and} \quad 
b(r) = 
\cases{
2 & \text{if $\frac{3}{4} \leq r \leq 1$,} \\
0 & \text{otherwise.}
}
\end{eqnarray*}

In order to illustrate the performance of the reconstruction method, a sequence of numerical
tests was made for increasingly larger sets of frequencies $\mathcal{K} = \{ k_{j} = j ~:~ j=1,...,J\}$. Using the proposed algorithm (\ref{Eqn.MinMulti})-(\ref{Eqn.NormalEqns}) and numerically generated measurements, the reconstructed sources $f_{J}^{(1)}$ and $f_{J}^{(2)}$ were obtained as a function of the parameter $J$. The relative error between the true and recovered sources is computed as follows,
\begin{eqnarray}
\epsilon_{J}^{(i)} = \frac{\| f^{(i)} - f_{J}^{(i)} \|_{L^{2}(\Omega)} }{\| f^{(i)} \|_{L^{2}(\Omega)}}, \qquad \text{for $i=1,2$}, \label{Eq.error}
\end{eqnarray}
and the results are displayed in Table \ref{Table1}.

\begin{table}[h!] 
\caption{\label{Table1} Relative error (\ref{Eq.error}).}  
\begin{indented} \item[]
\begin{tabular}{lllll}
\br
$J$& ~ & $\epsilon_{J}^{(1)}$ & ~ & $\epsilon_{J}^{(2)}$ \\ 
\mr
10 & ~ & 4.14e-1 & ~ & 1.30e-1  \\ 
20 & ~ & 2.93e-1 & ~ & 1.92e-2  \\ 
30 & ~ & 2.42e-1 & ~ & 6.27e-3  \\ 
40 & ~ & 2.12e-1 & ~ & 4.25e-3  \\ 
50 & ~ & 1.88e-1 & ~ & 2.74e-3  \\
60 & ~ & 1.73e-1 & ~ & 2.34e-3  \\
\br
\end{tabular} 
\end{indented}
\end{table}

We also display the true and reconstructed sources for $J=40$ in Figure \ref{Fig.Fig1}. From these numerical experiments, it is clear that the proposed reconstruction method performs better when the source is smooth. This is typically the case in the general framework of harmonic approximation theory where convergence rates are directly related to the smoothness of the function being approximated. It is seen from Table \ref{Table1} that the convergence rate for the sources $f^{(1)}$ and $f^{(2)}$ is about $\sim 0.5$ and $ \sim 2.5$, respectively. This  coincides roughly with smoothness degree (in the Sobolev sense) of these two sources. We also see in Figure \ref{Fig.Fig1}, that the reconstruction of a discontinuous sources, such as $f^{(1)}$, is polluted by the well-known Gibbs phenomenon. 

\begin{figure}[h!] 
\begin{indented} \item[]
\hspace{-0.05 \textwidth}
 \includegraphics[width=0.48 \textwidth]{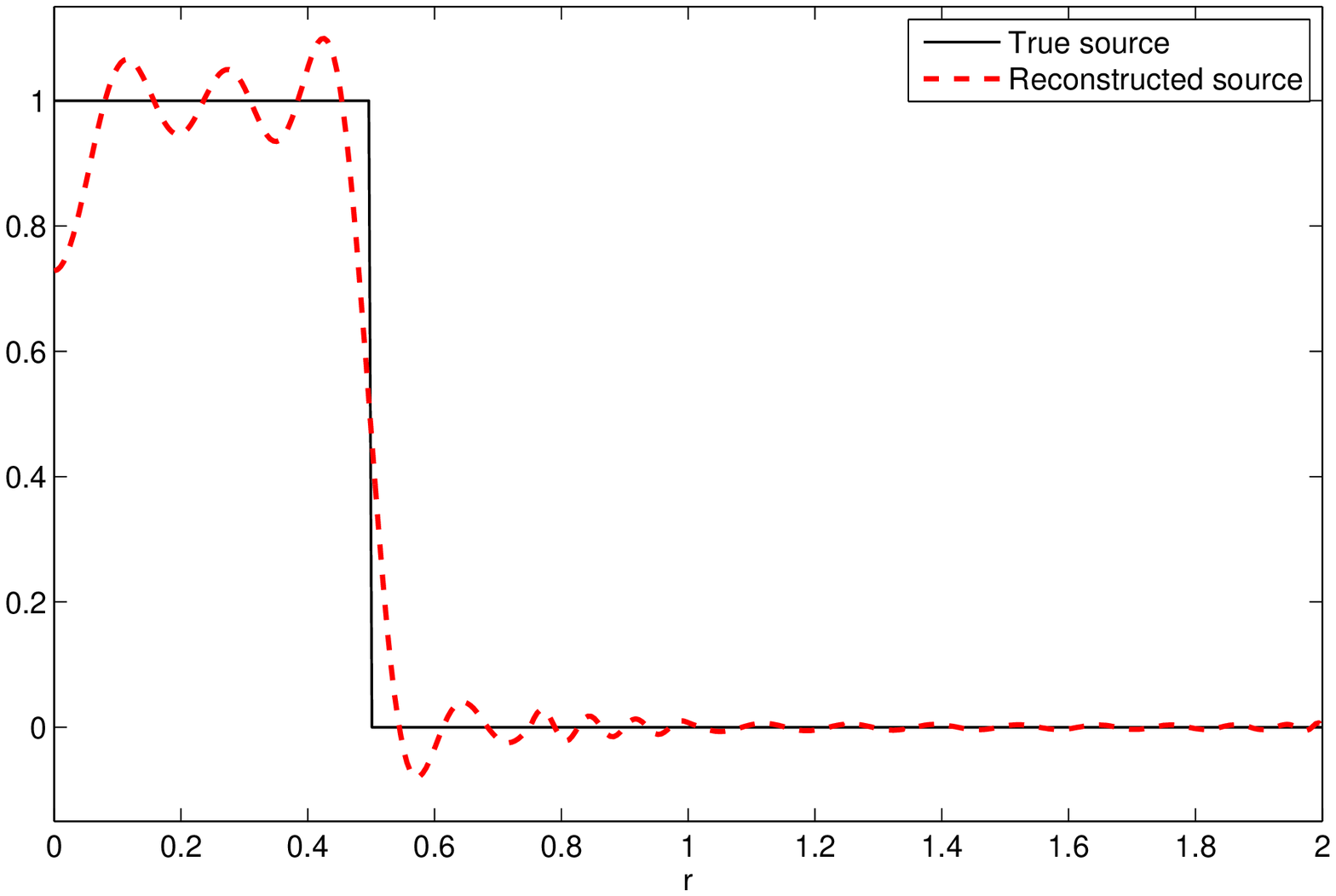} 
\hspace{-0.05 \textwidth}
 \includegraphics[width=0.48 \textwidth]{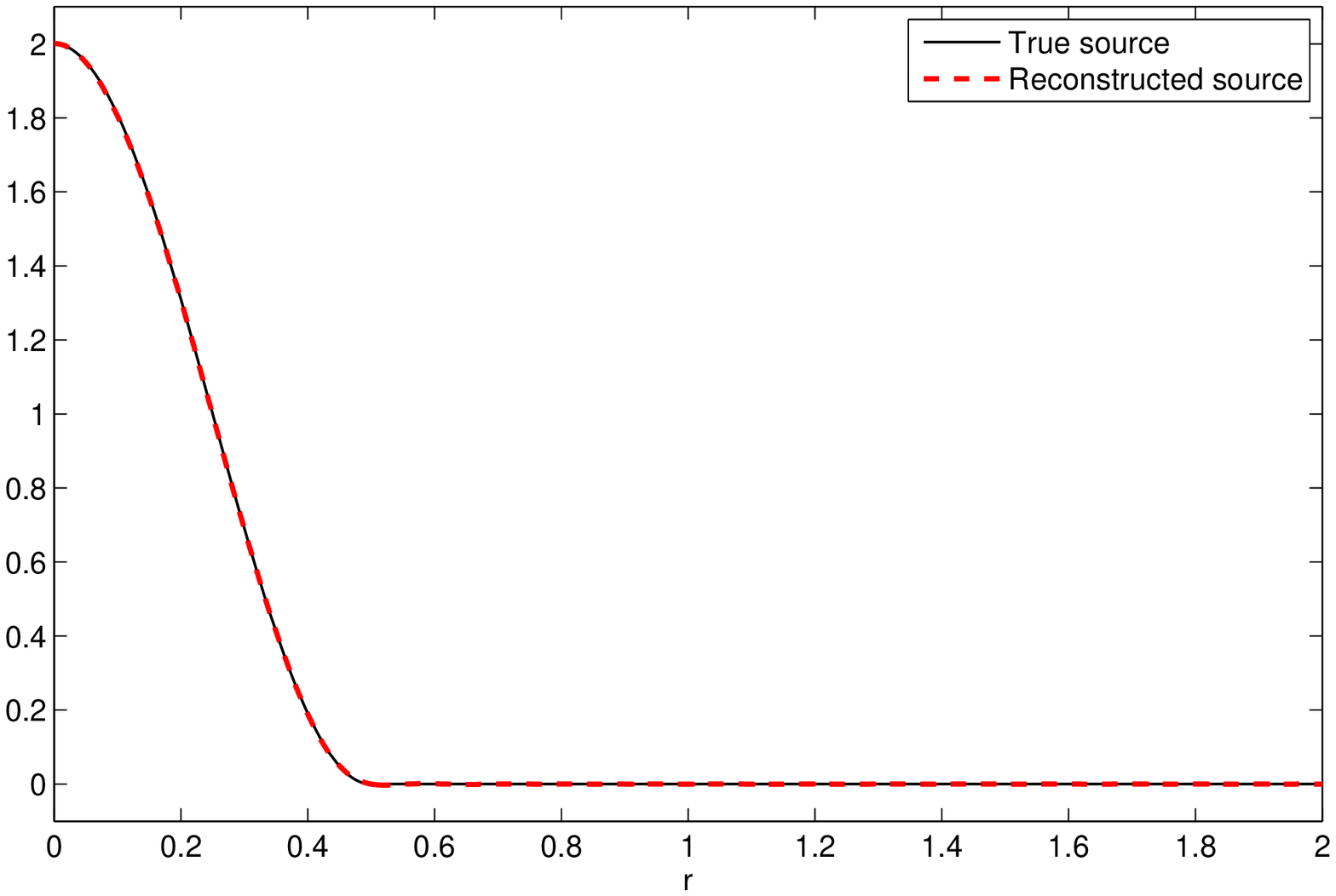}
\caption{\label{Fig.Fig1} Comparison between true sources $f^{(1)}$ and $f^{(2)}$, and corresponding reconstructed sources $f^{(1)}_{J}$ and $f^{(2)}_{J}$ for $J = 40$.}
\end{indented}
\end{figure}

\subsection{Future work} \label{Sec:Future}
We have presented a few numerical examples to illustrate the results from the proposed reconstruction algorithm (\ref{Eqn.MinMulti})-(\ref{Eqn.NormalEqns}). These two examples only consider spherically symmetric sources and corresponding radiating fields. The three-dimensional numerical implementation and a stability analysis for the proposed algorithm are the subject of our current work. Our efforts are directed towards the approximation of the adjoint fields $\psi_{j}$ using the finite element method \cite{Ihlen1998}. The goal is to numerically solve the Adjoint Problem \ref{Prob:AdjointProb} using accurate approximations for the adjoint DtN operator $\Lambda^{*}$. Fortunately, the DtN map $\Lambda$ has become a powerful tool to numerically handle BVPs in unbounded domains \cite{Ihlen1998}. Approximations to this operator are commonly known as absorbing or nonreflecting boundary conditions designed to ensure the \emph{outgoing} behavior of the radiating fields. Analogous approximations can be derived for the adjoint DtN map $\Lambda^{*}$ associated with \emph{incoming} wave fields.

We are also interested in the stability and convergence analysis for the system (\ref{Eqn.NormalEqns}). From physical considerations, one would naturally expect the propose algorithm to resolve finer features of the true source as $k_{J} \to \infty$ while keeping the difference between consecutive frequencies reasonably small. It is also important to consider strategies such as \cite{AdcHan2012} for the resolution of the Gibbs phenomenon seen in the reconstruction of discontinuous sources. As soon as meaningful results are obtained, they will be reported in a forthcoming paper.


\section*{Acknowledgments} \label{Sec:Acknowledgements}
The authors gratefully acknowledge the support provided by the CHIRP grant from the College of Physical and Mathematical Sciences at Brigham Young University. The authors also wish to
thank the anonymous referees for their most constructive reports.


\section*{References}

\bibliographystyle{unsrt}
\bibliography{Biblio}

\end{document}